\newtheorem{thm}{Theorem}[section]
\newtheorem{cor}[thm]{Corollary}
\newtheorem{lem}[thm]{Lemma}
\newtheorem{prop}[thm]{Proposition}
\newtheorem{defn}[thm]{Definition}
\newtheorem{rem}[thm]{\bf{Remark}}
\newtheorem{exam}[thm]{Example}
\numberwithin{equation}{section}
\begin{document}

\title{Rings with $\Delta$-quasipolarity Property}

\author{Tugce Pekacar Calci}
\author{Serhat Emirhan Soycan}
\address{
Department of Mathematics, Ankara University, Ankara, Turkey}
\email{<tcalci@ankara.edu.tr>}
\address{
Ankara University Graduate School of National and Applied Sciences, Ankara, Turkey}
\email{<sesoycan@ankara.edu.tr>}

\subjclass[2020]{ 16N40, 16S50, 16U99.} \keywords{Jacobson radical, idempotent, quasipolar ring, $\Delta(R)$}

\begin{abstract}
This study provides a comprehensive investigation into the structure and properties of a novel class of rings known as $\Delta$-quasipolar rings, in which for every $a\in R$ there exisxt $p^2=p \in comm^2(a)$ such that $a+p \in \Delta(R)$. Here, $\Delta(R)$ is the largest Jacobson radical subring of $R$ which is closed with respect
to multiplication by units of $R$.
We have developed a general and comprehensive theoretical framework for $\Delta$-quasipolar rings. Within this framework, we have described the fundamental mathematical properties of these rings and examined their behavior under various classical algebraic structures, such as matrix rings and polynomial rings. Furthermore, we have investigated how this class of rings is related to other well-known ring types in mathematics, such as strongly $\Delta$-clean rings, uniquely clean rings, $J$-quasipolar rings and others.

 \vspace{2mm}

\end{abstract}

\maketitle

\section{Introduction}
Throughout this paper, all rings are assumed to be unital and associative. As usual, we denote the Jacobson radical, the set of nilpotent elements, the set of idempotent elements, and the group of units of a ring $R$ by $J(R)$, $\text{Nil}(R)$, $\text{Id}(R)$, and $U(R)$, respectively. The rings of $n \times n$ matrices and $n \times n$ upper triangular matrices over $R$ are denoted by $M_n(R)$ and $T_n(R)$, respectively. A ring is said to be \textit{abelian} if every idempotent element is central.

    Following \cite{Koliha}, we adopt the following notations. For an element $a\in R$, the \textit{commutant} and \textit{double commutant} of $a\in R$ are defined by $comm(a)=\{x \in R \mid ax=xa \}$ and $comm^2(a)=\{y \in R \mid xy=yx \text{ for all } x\in comm(a) \}$. Let $R^{qnil}=\{a\in R ~|~1+ax\in U(R) \text{ for all } x\in comm(a)\}$, and if $a\in R^{qnil}$, then $a$ is called \textit{quasinilpotent}. An element \( r \in R \) is said to be \emph{clean} if there exist an idempotent \( e \in R \) and a unit \( u \in R \) such that \( r = e + u \). Moreover, \( r \) is called \emph{strongly clean} if, in addition, the idempotent and the unit can be chosen to commute; that is, \( eu = ue \). Accordingly, a ring \( R \) is called \emph{clean} (respectively, \emph{strongly clean}) if every element of \( R \) is clean (respectively, strongly clean). An element $a\in R$ is called {\it quasipolar} if there exists $p^2=p\in R$ such that $p\in comm^2(a)$, $a+p\in U(R)$ and $ap\in R^{qnil}$. An idempotent satisfying the above conditions is called a spectral idempotent of $a$. Ying and Chen \cite{Ying} called a ring $R$ quasipolar if every element of $R$ is quasipolar. After these Cui and Chen investigated the quasipolarity in terms of Jacobson radical in \cite{Cui}. Let $a\in R$, $a$ is called \textit{$J$-quasipolar} if there exists $p^2=p\in comm^2(a)$ such that $a+p\in J(R)$. A ring $R$ is called $J$-quasipolar if every element of $R$ is $J$-quasipolar.

By analyzing the aforementioned definitions and noting that $\Delta(R)$ is a (possibly proper) subset of $J(R)$ which is not necessarily an ideal, a natural question arises: what is the structure of those rings $R$ in which the sum of the element $a$ of $R$ and the idempotent $p$ of $R$ is an element from $\Delta(R)$ where $p\in comm^2(a)$? The main goal of this paper is to investigate such rings and provide a detailed analysis of their structural properties.

Quasipolar-type rings have been extensively studied due to their rich internal structure and connections with various concepts in noncommutative ring theory. In particular, the notions of $J$-quasi-\linebreak polar rings and $\delta$-quasipolar rings have led to significant developments in understanding how ring elements can be decomposed into simpler algebraic parts. Motivated by this framework, the present work introduces and examines the class of $\Delta$-quasipolar rings. This terminology arises naturally from the preceding discussion. It is clear that all $J$-quasipolar rings are $\Delta$-quasipolar.

   In light of this, $\Delta(R)$ defined as $$
\Delta(R) = \{ x \in R \mid 1 - xu \in U(R) \text{ for all } u \in U(R) \},$$ has been the subject of extensive study as a natural generalization of the Jacobson radical (see \cite{Leroy} and\cite{Lam2}). Recently, a ring \( R \) is called \emph{strongly \( \Delta \)-clean} if every element of \( R \) can be written as the sum of an idempotent from \( R \) and an element from \( \Delta(R) \) that commute with each other, \cite{Danchev}.

    The aim of this article is to introduce $\Delta$-quasipolar rings. An element $a\in R$ is called \textit{$\Delta$-quasipolar} if there exists an idempotent $p\in comm^2(a)$ such that $a+p\in \Delta(R)$. Idempotents satisfying the above conditions called  $\Delta$-spectral idempotents. If every element of $R$ is $\Delta$-quasipolar, then $R$ is called $\Delta$-quasipolar ring. In section 2, we have described the fundamental mathematical properties of these rings and examined their behavior under various classical algebraic structures, such as Dorroh extension of a ring, polynomial rings and matrix rings. Furthermore, we have investigated how this class of rings is related to other well-known ring types in mathematics, such as strongly $\Delta$-clean rings, uniquely clean rings, $J$-quasipolar rings and others. We showed that every uniquely clean ring is $\Delta$-quasipolar. In addition, every $\Delta$-quasipolar ring is strongly $\Delta$-clean. We also proved that the matrix ring $M_n(R)$ is not $\Delta$-quasipolar in general. Lastly, we have determined the $\Delta(H_{(s,t)}(R))$ and showed that $H_{(s,t)}(R)$ is $\Delta$-quasipolar if and only if $R$ is $\Delta$-quasipolar. 

\section{$\Delta$-quasipolar rings}

Here, we begin with the structure that plays a key role in our study.
\[
\Delta(R) = \{r \in R \mid \forall u \in U(R),\ r + u \in U(R)\}\supseteq J(R).
\]

\begin{lem}\cite[Lemma 1]{Leroy}
 For any ring $R$, we have:
\begin{enumerate}
    \item $\Delta(R) = \{r \in R \mid \forall u \in U(R),\ ru + 1 \in U(R)\} = \{r \in R \mid \forall u \in U(R),\ ur + 1 \in U(R)\};$
    \item \textit{For any $r \in \Delta(R)$ and $u \in U(R)$, $ur, ru \in \Delta(R)$;}
    \item $\Delta(R)$ \textit{is a subring of $R$;}
    \item $\Delta(R)$ \textit{is an ideal of $R$ if and only if $\Delta(R) = J(R)$;}
    \item \textit{For any rings $R_i$, $i \in I$, $\Delta\left( \prod_{i \in I} R_i \right) = \prod_{i \in I} \Delta(R_i)$.}
\end{enumerate}
\end{lem}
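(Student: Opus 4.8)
The plan is to treat the five parts in order, using the two ``one-sided'' reformulations of part (1) as the workhorse for everything that follows. For part (1) I would exploit the factorizations $r+u=(ru^{-1}+1)u=u(u^{-1}r+1)$: since $u$ is a unit, $r+u\in U(R)$ holds if and only if $ru^{-1}+1\in U(R)$, and likewise if and only if $u^{-1}r+1\in U(R)$. Because $u\mapsto u^{-1}$ is a bijection of $U(R)$, letting $u$ range over all units converts the defining condition ``$r+u\in U(R)$ for all $u$'' into ``$rv+1\in U(R)$ for all $v\in U(R)$'' (and symmetrically into ``$vr+1\in U(R)$ for all $v$''), which is exactly the claimed description. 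Part (2) is then immediate: if $r\in\Delta(R)$ and $u\in U(R)$, then for any $v\in U(R)$ we have $(ru)v+1=r(uv)+1\in U(R)$ since $uv\in U(R)$, so $ru\in\Delta(R)$; the claim for $ur$ follows the same way via the left-hand description in (1).

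For part (3) I would first dispose of the additive structure: closure under negation follows from $-r+u=-(r+(-u))$ together with $-u\in U(R)$, and closure under addition from $(r+s)+u=r+(s+u)$ by applying the definition twice. The main obstacle is \emph{multiplicative} closure, and the key is the identity
\[
1+ab=(1+a)-a(1-b).
\]
Given $a,b\in\Delta(R)$, the definition gives $1+a\in U(R)$ and $1-b\in U(R)$ (take $u=1$ and $u=-1$), so part (2) yields $a(1-b)\in\Delta(R)$, whence $-a(1-b)\in\Delta(R)$ by the additive closure just established. The identity then exhibits $1+ab$ as the sum of the element $-a(1-b)\in\Delta(R)$ and the unit $1+a$, so $1+ab\in U(R)$. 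To upgrade this to $ab\in\Delta(R)$ I would apply the same computation to $a$ and $bu$ (which lies in $\Delta(R)$ by part (2)) for an arbitrary unit $u$, obtaining $1+a(bu)=1+(ab)u\in U(R)$ for every $u\in U(R)$; by the description in (1) this says precisely $ab\in\Delta(R)$. I expect this step -- spotting the right algebraic identity -- to be the only genuinely delicate point; note that the resulting object is a subring \emph{without} identity, consistent with $J(R)\subseteq\Delta(R)$.

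Part (4) splits into two easy implications. If $\Delta(R)=J(R)$ then $\Delta(R)$ is an ideal, because $J(R)$ always is. Conversely, suppose $\Delta(R)$ is an ideal; since $\Delta(R)\supseteq J(R)$ always holds, it remains to show $\Delta(R)\subseteq J(R)$. For $r\in\Delta(R)$ and any $x\in R$, the ideal hypothesis gives $rx\in\Delta(R)$, and evaluating the description in (1) at $u=1$ yields $1+rx\in U(R)$; as $x$ was arbitrary, this is the standard unit criterion characterizing membership in $J(R)$, giving the reverse inclusion and hence equality. Finally, part (5) is a direct unwinding of the product structure: using $U\!\left(\prod_{i}R_i\right)=\prod_i U(R_i)$, an element $r=(r_i)$ satisfies $r+u\in U\!\left(\prod_i R_i\right)$ for all units $u=(u_i)$ if and only if $r_i+u_i\in U(R_i)$ for every $i$ and every choice of $u_i\in U(R_i)$; since the coordinates of $u$ may be chosen independently, this is equivalent to $r_i\in\Delta(R_i)$ for all $i$, i.e.\ to $r\in\prod_i\Delta(R_i)$.
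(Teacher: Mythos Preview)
Your proof is correct in all five parts. Note, however, that the paper does not supply its own proof of this lemma: it is quoted verbatim as \cite[Lemma 1]{Leroy} and used as a black box, so there is no in-paper argument to compare against. Your write-up therefore goes beyond what the paper provides. The only place worth a small polish is part (4): rather than invoking the elementwise criterion ``$1+rx\in U(R)$ for all $x$ implies $r\in J(R)$'', you could phrase it as ``$J(R)$ is the largest two-sided ideal $I$ with $1+I\subseteq U(R)$'', which is exactly what your computation establishes once $\Delta(R)$ is assumed to be an ideal; this avoids any ambiguity about one-sided versus two-sided invertibility in the Jacobson-radical characterization.
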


\begin{lem}
    Let $R$ be a ring. If $U(R)\subseteq C(R)$, then $R^{qnil}\subseteq \Delta(R)$.
\end{lem}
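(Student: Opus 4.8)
The plan is to connect the two quantified definitions directly. The definition of $R^{qnil}$ quantifies over the commutant $comm(a)$, whereas the characterization of $\Delta(R)$ supplied by part (1) of the previous lemma quantifies over the group of units $U(R)$. The hypothesis $U(R)\subseteq C(R)$ is exactly the bridge between these two ranges, since central elements commute with everything and so land inside $comm(a)$.

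First I would fix an arbitrary $a\in R^{qnil}$ together with an arbitrary unit $u\in U(R)$. Because $U(R)\subseteq C(R)$, the unit $u$ is central, so in particular $au=ua$, i.e. $u\in comm(a)$. The defining property of $R^{qnil}$ may then be applied with $x=u$, yielding $1+au\in U(R)$. Since $u$ was an arbitrary unit, this shows $1+au\in U(R)$ for every $u\in U(R)$.

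Next I would invoke the characterization $\Delta(R)=\{r\in R\mid \forall u\in U(R),\ ru+1\in U(R)\}$ from part (1) of the previous lemma. Taking $r=a$, the statement just established, namely that $au+1\in U(R)$ for every unit $u$, is precisely the assertion $a\in\Delta(R)$. As $a\in R^{qnil}$ was arbitrary, this gives the desired inclusion $R^{qnil}\subseteq\Delta(R)$.

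The argument is essentially immediate once the correct description of $\Delta(R)$ is used, so there is no genuine obstacle beyond the single observation that the hypothesis forces $U(R)\subseteq comm(a)$; this is what lets the quasinilpotency condition, which a priori tests only elements of the commutant, already certify membership in $\Delta(R)$, whose test ranges over all units. The one point to watch is matching $1+au$ to the right one-sided form ($ru+1$ versus $ur+1$) in the cited lemma, but both forms are recorded there, so either choice closes the proof.
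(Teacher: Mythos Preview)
Your argument is correct: using $U(R)\subseteq C(R)$ to place every unit inside $comm(a)$, and then invoking the characterization $\Delta(R)=\{r\in R\mid ru+1\in U(R)\text{ for all }u\in U(R)\}$, is exactly the right bridge between the two definitions.

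The paper's printed proof, by contrast, is defective. It begins with ``for any $d\in\Delta(R)$'' and concludes ``$d\in R^{qnil}$'', i.e.\ it argues the reverse inclusion $\Delta(R)\subseteq R^{qnil}$ rather than the stated one $R^{qnil}\subseteq\Delta(R)$; and even that reverse inclusion is not actually established there, since knowing $1+du\in U(R)$ only for units $u$ does not test all $x\in comm(d)$. Your proof is the one the lemma needs.
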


\begin{proof}
    Let $R$ be a ring with $U(R)\subseteq C(R)$. Then for any $d\in \Delta(R)$, $1+du\in U(R)$. Since $U(R)\subseteq C(R)$, $d\in R^{qnil}$.
\end{proof}

We now introduce the key definition that underpins the framework of our investigation.
\begin{defn}
{\rm    An element $a\in R$ is called $\Delta$-quasipolar if there exists $p^2=p\in comm^2(a)$ such that $a+p\in \Delta (R)$; an idempotent $p$ is called a $\Delta$-spectral idempotent.  The ring $R$ is called $\Delta$-quasipolar if every element of $R$ is $\Delta$-quasipolar.}
\end{defn}

\begin{rem}
   {\rm $\Delta$-spectral idempotent need not be unique. Consider the ring $T_2(\Bbb Z_2)$ which is $\Delta$-quasipolar by Example \ref{counterex}. Here, $\left[%
\begin{array}{cc}
  1 & 1 \\
  0 & 0 \\
\end{array}%
\right] \in T_2(\Bbb Z_2)$ is $\Delta$-quasipolar with $\Delta$-spectral idempotents $\left[%
\begin{array}{cc}
  1 & 1 \\
  0 & 0 \\
\end{array}%
\right] $ and $\left[%
\begin{array}{cc}
  1 & 0 \\
  0 & 0 \\
\end{array}%
\right] $ since $\Delta(T_2(R)) = D_2(\Delta(R)) + J_2(R)$ where $J_2(R)$ the ideal of $T_2(R)$ consisting of all strictly upper triangular matrices and $D_2(R)$ the
subring of diagonal matrices by \cite{Leroy}.}
\end{rem}

At this point, we offer a few basic examples to demonstrate the routine calculations.
\begin{exam}\label{example}
   {\rm \begin{enumerate}
        \item Every element of $\Delta(R)$ is $\Delta$-quasipolar.
        \item Every $J$-quasipolar ring is $\Delta$-quasipolar. 
        \item If $a\in R$ is $\Delta$-quasipolar, then $-1-a$ is $\Delta$-quasipolar.
    \end{enumerate}}
\end{exam}

The following theorem provides a link between 
$\Delta$-quasipolar rings and 
strongly $\Delta$-clean rings.

\begin{thm}\label{stdelta}
    Every $\Delta$-quasipolar ring is strongly $\Delta$-clean. Converse holds when $R$ is abelian.
\end{thm}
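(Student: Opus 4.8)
The plan is to prove the two implications separately, in both cases exploiting that $\Delta(R)$ is a subring (and hence closed under additive inverses), as recorded in the structural lemma above.

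For the forward implication I would take an arbitrary $a\in R$ and aim to produce a commuting idempotent-plus-$\Delta(R)$ decomposition. The only friction is a sign mismatch: $\Delta$-quasipolarity yields a relation of the shape $a+p\in\Delta(R)$, whereas strong $\Delta$-cleanness requires $a=e+d$ with $d\in\Delta(R)$. To reconcile these, I apply the $\Delta$-quasipolar hypothesis to the element $-a$ rather than to $a$. Since $comm(-a)=comm(a)$ and therefore $comm^2(-a)=comm^2(a)$, I obtain an idempotent $p\in comm^2(a)$ with $-a+p\in\Delta(R)$. Negating and using that $\Delta(R)$ is closed under additive inverses gives $a-p=-(-a+p)\in\Delta(R)$, so that $a=p+(a-p)$ realizes $a$ as the sum of the idempotent $p$ and the element $a-p\in\Delta(R)$. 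Finally $p\in comm^2(a)\subseteq comm(a)$ guarantees $p(a-p)=(a-p)p$, so the decomposition commutes and $a$ is strongly $\Delta$-clean. Note that no hypothesis beyond $\Delta$-quasipolarity is used in this direction.

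For the converse I would assume $R$ is abelian and strongly $\Delta$-clean and take an arbitrary $a\in R$. Applying strong $\Delta$-cleanness to $-a$ produces an idempotent $f$ and an element $c\in\Delta(R)$ with $fc=cf$ and $-a=f+c$; rearranging gives $a+f=-c\in\Delta(R)$. It then remains only to upgrade $f$ from the commutant to the double commutant of $a$, and this is exactly where abelianness enters: since every idempotent of an abelian ring is central, $f$ commutes with every element of $R$, in particular with every member of $comm(a)$, whence $f\in comm^2(a)$. Thus $f$ is a $\Delta$-spectral idempotent for $a$, and $a$ is $\Delta$-quasipolar.

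The one genuine subtlety, and the step I expect to be the crux, is this last promotion of the idempotent. Strong $\Delta$-cleanness by itself yields only a decomposition whose idempotent commutes with $a$, i.e. lies in $comm(a)$, which is strictly weaker than the membership $f\in comm^2(a)$ demanded by $\Delta$-quasipolarity. The abelian hypothesis closes precisely this gap by forcing all idempotents to be central and hence automatically in the double commutant of every element; without it the converse is not to be expected.
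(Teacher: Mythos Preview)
Your proof is correct and follows essentially the same strategy as the paper's: apply the hypothesis to a translate of $a$, rearrange, and (for the converse) invoke abelianness to promote the idempotent to the double commutant. The only cosmetic difference is that the paper shifts by $\pm 1$ (applying the hypothesis to $a-1$ and $a+1$ and using the complementary idempotent $1-p$), whereas you negate (applying it to $-a$ and using closure of $\Delta(R)$ under additive inverses); both tricks resolve the same sign mismatch.
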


\begin{proof}
    Let $R$ be a $\Delta$-quasipolar ring and $a\in R$. Then $a-1\in R$ and $a-1$ is $\Delta$-quasipolar. Then there exists $p^2=p\in comm^2(a-1)$ such that $a-1+p\in \Delta (R)$.  Hence there exists $d\in \Delta(R)$ such that $a-1+p=d$. Therefore $a=1-p+d$. So $a$ is strongly $\Delta$-clean. Now let $R$ be an abelian strongly $\Delta$-clean ring and $a\in R$. Then $a+1$ is strongly $\Delta$-clean. Hence there exists $p^2=p\in comm(a)$ such that $a+1=p+d$. This implies $a+1-p=d$. Since $R$ is abelian, $1-p\in comm^2(a)$. This completes the proof.
\end{proof}

Exploiting Theorem \ref{stdelta} and \cite[Corollary 4.3]{Danchev}, we have following:

\begin{cor}\label{TnR}
    Let $R$ be an abelian ring. Then the following statements are equivalent.
    \begin{enumerate}
        \item $R$ is $\Delta$-quasipolar.
        \item $R$ is strongly $\Delta$-clean.
        \item R is uniquely clean.
    
    \end{enumerate}
\end{cor}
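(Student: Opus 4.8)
The plan is to establish the three equivalences by assembling two results already available, namely Theorem \ref{stdelta} and \cite[Corollary 4.3]{Danchev}; since $R$ is assumed abelian, essentially all of the substantive work has been done elsewhere, and the remaining task is to chain the implications correctly.

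First I would dispatch (1) $\Leftrightarrow$ (2). The implication (1) $\Rightarrow$ (2) is the first assertion of Theorem \ref{stdelta}, which holds for an arbitrary ring with no commutativity hypothesis. The reverse implication (2) $\Rightarrow$ (1) is precisely the second assertion of Theorem \ref{stdelta}, and here the standing assumption that $R$ is abelian is exactly what is consumed: recall that in the proof of that theorem abelianness is used to guarantee $1-p\in comm^2(a)$. Thus, under the hypothesis of the corollary, (1) and (2) are equivalent.

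Next I would establish (2) $\Leftrightarrow$ (3) by invoking \cite[Corollary 4.3]{Danchev}, which characterizes abelian strongly $\Delta$-clean rings as exactly the uniquely clean rings; I would note that uniquely clean rings are automatically abelian, so no hypothesis is lost in passing to statement (3). Composing the two biconditionals then yields the full equivalence of (1), (2), and (3), completing the proof.

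The only genuine point requiring care — the step I would treat most attentively — is the correct placement of the abelian hypothesis, which is superfluous for (1) $\Rightarrow$ (2) but indispensable for (2) $\Rightarrow$ (1). I would also verify that the cited form of \cite[Corollary 4.3]{Danchev} is stated as a biconditional in the abelian setting, since that is where the strongly $\Delta$-clean/uniquely clean correspondence is cleanest; should that reference supply only one direction, I would close the cycle instead by supplementing the reverse implication with the fact recorded in the Introduction that every uniquely clean ring is $\Delta$-quasipolar, which gives (3) $\Rightarrow$ (1) $\Rightarrow$ (2) and hence the missing arrow.
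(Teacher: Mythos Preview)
Your proposal is correct and matches the paper's approach exactly: the paper derives the corollary directly from Theorem \ref{stdelta} together with \cite[Corollary 4.3]{Danchev}, which is precisely the chain of implications you describe. Your additional remarks about where the abelian hypothesis is actually consumed and the fallback via (3) $\Rightarrow$ (1) are sound elaborations, but the core argument is identical.
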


A natural question arises as to whether 
$\Delta$-quasipolar rings are abelian. The next example sheds light on this and further reveals that 
$\Delta$-quasipolar rings are not necessarily uniquely clean.

\begin{exam}\label{counterex}
   {\rm It is obvious that $T_2(\Bbb{Z}_2)$ is $J$-quasipolar (see \cite{Cui}), so is $\Delta$-quasipolar. However it is not abelian, thus it is not uniquely clean by \cite[Theorem 2.1]{Chen}.}
\end{exam}
In addition, leveraging Theorem \ref{stdelta} in conjunction with \cite[Theorem 3.4]{Danchev}, we arrive at the following results.

\begin{cor}
    Let $R$ be a ring. 

    \begin{enumerate}
        \item If $R$ is uniquely $\Delta$-clean, then $R$ is $\Delta$-quasipolar.
        \item If $R$ is uniquely clean, then $R$ is $\Delta$-quasipolar.
    \end{enumerate}
\end{cor}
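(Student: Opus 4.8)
The plan is to deduce both statements from the converse half of Theorem~\ref{stdelta}, which guarantees that an abelian strongly $\Delta$-clean ring is $\Delta$-quasipolar. Thus, for each hypothesis it suffices to extract two facts from \cite[Theorem 3.4]{Danchev}: that $R$ is abelian, and that $R$ is strongly $\Delta$-clean. Once these are in place, the conclusion is immediate.

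For part (1), I would first recall that a uniquely $\Delta$-clean ring is abelian, since uniqueness of the decomposition $a = e + d$ (with $e^2 = e$ and $d \in \Delta(R)$) forces every idempotent to be central; this is precisely the content supplied by \cite[Theorem 3.4]{Danchev}. In particular such a ring is strongly $\Delta$-clean, the required commuting condition between the idempotent and the $\Delta$-element being automatic once idempotents are central. With $R$ abelian and strongly $\Delta$-clean, the converse direction of Theorem~\ref{stdelta} then yields at once that $R$ is $\Delta$-quasipolar.

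For part (2), I would proceed along the same lines, using that a uniquely clean ring is abelian (a classical fact, also recorded in \cite[Theorem 3.4]{Danchev}) and that $U(R) = 1 + J(R)$, which holds because $R/J(R)$ is Boolean. The only additional point is a short computation exhibiting a strongly $\Delta$-clean decomposition: given $a = e + u$ with $u = 1 + j$ and $j \in J(R)$, one rewrites $a = (1-e) + (2e + j)$. Since $R/J(R)$ has characteristic $2$ we have $2 \in J(R)$, whence $2e + j \in J(R) \subseteq \Delta(R)$; and $1-e$, being central, commutes with $2e+j$. Thus $R$ is abelian and strongly $\Delta$-clean, and Theorem~\ref{stdelta} again gives the conclusion.

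The only genuine obstacle is securing the abelian hypothesis that makes the converse of Theorem~\ref{stdelta} applicable: everything hinges on the principle that "unique"-type clean conditions force central idempotents. This is exactly what I would borrow from \cite[Theorem 3.4]{Danchev} rather than reprove from scratch; the remaining $\Delta$-clean decomposition in part (2) is entirely routine once $U(R) = 1 + J(R)$ and $2 \in J(R)$ are available.
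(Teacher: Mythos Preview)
Your proposal is correct and follows the same route as the paper: the paper simply states that the corollary follows from Theorem~\ref{stdelta} together with \cite[Theorem~3.4]{Danchev}, and you have spelled out precisely how---namely, that each uniqueness hypothesis forces $R$ to be abelian and strongly $\Delta$-clean, whence the converse half of Theorem~\ref{stdelta} applies. Your extra computation in part~(2) is fine but could be shortened by invoking Corollary~\ref{TnR} directly once abelianness is in hand.
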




\begin{lem}\label{2}
    Let $R$ be a ring. If $R$ is $\Delta$-quasipolar, then $2\in \Delta(R)$.
\end{lem}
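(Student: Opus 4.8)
The plan is to probe the hypothesis at the single simplest element, namely $a = 1$, and to extract $2 \in \Delta(R)$ directly from the defining property of $\Delta(R)$. The guiding observation is that the $\Delta$-spectral idempotent associated to $1$ is forced to equal $1$ itself.

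Concretely, I would first apply $\Delta$-quasipolarity to $1 \in R$: there exists an idempotent $p = p^2 \in comm^2(1)$ with $1 + p \in \Delta(R)$. The double-commutant requirement is harmless here, since $comm(1) = R$ and hence $comm^2(1) = C(R)$; in fact all I shall use is that $p$ is idempotent and that $1 + p \in \Delta(R)$.

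Then I would invoke the additive description $\Delta(R) = \{r \in R \mid r + u \in U(R)\ \text{for all}\ u \in U(R)\}$ recorded at the start of the section. Because $-1 \in U(R)$ for any ring, specializing $r = 1 + p$ and $u = -1$ gives $(1 + p) + (-1) = p \in U(R)$. Thus $p$ is simultaneously idempotent and invertible, and multiplying $p^2 = p$ by $p^{-1}$ yields $p = 1$; therefore $2 = 1 + p \in \Delta(R)$, which is the claim.

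There is no serious obstacle here: the entire argument hinges on the two choices $a = 1$ and $u = -1$, together with the elementary fact that an idempotent unit must be $1$. The only points meriting attention are using the \emph{additive} characterization of $\Delta(R)$ (rather than the multiplicative one from Lemma~2.1), so that the reduction to $p \in U(R)$ is transparent, and noting that the degenerate case $R = 0$ is subsumed, since there $2 = 0 \in \Delta(R)$ holds trivially.
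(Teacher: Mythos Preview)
Your argument is correct. Both you and the paper start by applying $\Delta$-quasipolarity to $a=1$ to obtain an idempotent $p$ with $1+p\in\Delta(R)$, but then diverge. You test the additive description of $\Delta(R)$ with $u=-1$ to force $p\in U(R)$, hence $p=1$ and $2=1+p\in\Delta(R)$. The paper never identifies $p$: it instead tests with the unit $u=1-2p$ (noting $(1-2p)^2=1$) to obtain $2-p\in U(R)$, and then invokes closure of $\Delta(R)$ under multiplication by units (Lemma~2.1(2)) together with the identity $(2-p)(1+p)=2$. Your route is the more elementary one, needing only the definition of $\Delta(R)$ and the trivial fact that an invertible idempotent equals $1$; the paper's route trades that observation for a short algebraic trick and an appeal to Lemma~2.1. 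A side benefit of your approach is that it explicitly establishes that the $\Delta$-spectral idempotent of $1$ is $1$, which the paper uses later (in Lemma~\ref{invidem}) for general units.
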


\begin{proof}
    Let $R$ be a $\Delta$-quasipolar ring and $1\in R$. Then there exists an idempotent $p\in comm^2(1)$ such that $1+p\in \Delta(R)$. Since $1-2p\in U(R)$, $1+p+1-2p=2-p\in \Delta(R)$. So, $(2-p)(1+p)=2\in \Delta(R)$.
\end{proof}

Let $R$ be a ring. One may wonder whether $R^{qnil}$ is equal to $\Delta(R)$ or not. Following example shows that this is not the case.

\begin{exam}
  {\rm Consider the ring $M_n(\Bbb{Q})$. It is known that $\Delta(M_n(\Bbb{Q})) = J(M_n(\Bbb{Q})) = M_n(J(\Bbb{Q}))$ by \cite{Leroy} and $(M_n(\Bbb{Q}))^{qnil}=Nil(M_n(\Bbb{Q}))$. Clearly, the matrix $\left[%
\begin{array}{cc}
  0 & 1 \\
  0 & 0 \\
\end{array}%
\right] \in (M_n(\Bbb{Q}))^{qnil}$. But not in $\Delta(M_n(\Bbb{Q}))$. So, we presented the following lemma:}
\end{exam}

\begin{lem}
    Let $R$ be a ring. If $a$ is $\Delta$-quasipolar, then $u^{-1}au$ is $\Delta$-quasipolar.
\end{lem}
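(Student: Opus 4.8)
The plan is to exploit the fact that conjugation by a unit is a ring automorphism and to transport the $\Delta$-spectral data of $a$ across it. Write $\phi\colon R\to R$ for the map $\phi(x)=u^{-1}xu$, which is an automorphism of $R$ with inverse $x\mapsto uxu^{-1}$. Since $a$ is $\Delta$-quasipolar, fix an idempotent $p\in comm^2(a)$ with $a+p\in\Delta(R)$. My candidate for a $\Delta$-spectral idempotent of $u^{-1}au=\phi(a)$ is $\phi(p)=u^{-1}pu$, and the goal is to verify the three defining conditions for $\phi(p)$ relative to $\phi(a)$.

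First I would check idempotency, which is immediate: $\phi(p)^2=\phi(p^2)=\phi(p)$ because $\phi$ is multiplicative and $p^2=p$. Next I would handle the double-commutant condition, which I expect to be the crux of the argument. The key observation is that an automorphism carries commutants to commutants: for any $x$, one has $\phi(comm(x))=comm(\phi(x))$, since $y\,\phi(x)=\phi(x)\,y$ holds for $y=\phi(z)$ precisely when $\phi(zx)=\phi(xz)$, i.e.\ when $zx=xz$, by injectivity of $\phi$. Applying this twice gives $\phi(comm^2(a))=comm^2(\phi(a))$. As $p\in comm^2(a)$, it follows that $u^{-1}pu=\phi(p)\in comm^2(\phi(a))=comm^2(u^{-1}au)$, which is exactly the required membership.

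Finally I would confirm that $u^{-1}au+u^{-1}pu=\phi(a+p)\in\Delta(R)$. Rather than invoking that automorphisms preserve $\Delta(R)$ abstractly, the cleanest route is Lemma 2.1(2): since $a+p\in\Delta(R)$ and $u^{-1}\in U(R)$, we get $u^{-1}(a+p)\in\Delta(R)$, and then multiplying on the right by the unit $u$ yields $u^{-1}(a+p)u\in\Delta(R)$. Rewriting this as $u^{-1}au+u^{-1}pu$ completes the verification. Putting the three steps together shows $u^{-1}au$ is $\Delta$-quasipolar with $\Delta$-spectral idempotent $u^{-1}pu$.

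The only genuinely delicate point is the double-commutant step; everything else is a direct consequence of $\phi$ being a multiplicative bijection together with Lemma 2.1(2). The remaining computations are routine, so I would present the commutant-transport lemma carefully and keep the rest brief.
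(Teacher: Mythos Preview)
Your proof is correct and follows essentially the same approach as the paper: both take $u^{-1}pu$ as the $\Delta$-spectral idempotent of $u^{-1}au$, verify the double-commutant condition by transporting through conjugation, and use closure of $\Delta(R)$ under multiplication by units (Lemma~2.1(2)) for the final step. The only cosmetic difference is that you package the commutant transport as the general fact $\phi(comm(x))=comm(\phi(x))$ for the inner automorphism $\phi$, whereas the paper carries out the same computation directly by hand on an arbitrary $x\in comm(u^{-1}au)$.
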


\begin{proof}
    Let $a\in R$ be $\Delta$-quasipolar. Then there exists $p^2=p\in comm^2(a)$ such that $a+p\in \Delta(R)$. Let $v=u^{-1}$. Suppose that $x\in comm(vau)$. Then we have $vaux=xvau$. Hence, $(uxv)a=a(uxv)$. So $uxv\in comm(a)$. Since $p\in comm^2(a)$, $p(uxv)=(uxv)p$. Therefore, $(vpu)x=x(vpu)$. So $(vpu)^2=vpu \in comm^2(vau)$. Now, since $a+p\in \Delta(R)$ and $\Delta(R)$ closed under multiplication with unit, $v(a+p)u=vau+vpu\in \Delta(R)$, as asserted.
\end{proof}

\begin{lem}\label{invidem}
    Let $R$ be a $\Delta$-quasipolar ring and $u\in U(R)$. Then the $\Delta$-spectral idempotent of $u$ is $1$.
\end{lem}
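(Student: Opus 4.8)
The plan is to work directly from the defining property of $\Delta(R)$ recalled at the start of this section, namely that $d\in\Delta(R)$ precisely when $d+v\in U(R)$ for every $v\in U(R)$. First I would fix a $\Delta$-spectral idempotent $p$ of $u$, so that $p^2=p\in comm^2(u)$ and $d:=u+p\in\Delta(R)$; the goal is then to force $p=1$.

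The key observation is that $-u$ is again a unit, since $-u=(-1)u$ and $-1\in U(R)$. I can therefore feed the particular choice $v=-u$ into the characterization of $\Delta(R)$. Because $d\in\Delta(R)$, this yields
\[
d+(-u)=(u+p)-u=p\in U(R).
\]
Finally, I would invoke the elementary fact that an idempotent which is simultaneously a unit must equal $1$: from $p^2=p$ and $p\in U(R)$, multiplying on the left by $p^{-1}$ gives $p=1$. Hence the $\Delta$-spectral idempotent of $u$ is $1$.

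I do not anticipate any real obstacle here; the whole argument is a one-line application of the definition of $\Delta(R)$ once one spots that subtracting the unit $-u$ collapses $u+p$ down to $p$. It is worth remarking that the hypothesis $p\in comm^2(u)$ is never actually used in this chain — only that $p$ is an idempotent with $u+p\in\Delta(R)$ — so the conclusion in fact shows that a unit admits \emph{no} $\Delta$-spectral idempotent other than $1$.
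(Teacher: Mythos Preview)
Your argument is correct and, in fact, cleaner than the paper's. The paper proceeds indirectly: from $u+p\in\Delta(R)$ it passes to $u^{-1}(u+p)\in\Delta(R)$ (using closure of $\Delta(R)$ under multiplication by units), so $1+u^{-1}(u+p)=2+u^{-1}p\in U(R)$; then it invokes Lemma~\ref{2} ($2\in\Delta(R)$, which genuinely requires the global hypothesis that $R$ is $\Delta$-quasipolar) to add $-2$ back and conclude $-u^{-1}p\in U(R)$, whence $p\in U(R)$ and $p=1$. Your route bypasses all of this by simply testing the definition of $\Delta(R)$ against the unit $-u$, which immediately yields $p\in U(R)$. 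As you observe, your proof never uses $p\in comm^2(u)$ nor the ambient $\Delta$-quasipolarity of $R$, so it actually establishes the stronger statement that \emph{any} idempotent $p$ with $u+p\in\Delta(R)$ for a unit $u$ must be $1$; the paper's proof, by contrast, leans on Lemma~\ref{2} and hence on the global hypothesis.
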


\begin{proof}
    Let $R$ be a $\Delta$-quasipolar ring and $u\in U(R)$. Then there exists $p^2=p\in comm^2(u)$ such that $u+p\in \Delta(R)$. Hence, $1+u^{-1}(u+p)=1+1+u^{-1}p=2+u^{-1}p\in U(R)$. By Lemma \ref{2}, $2\in \Delta(R)$. Therefore $-2-u^{-1}p+2=-u^{-1}p\in U(R)$. This implies that $p\in U(R)$. So $p=1$.
\end{proof}

\begin{lem}
    Let $R$ be a $\Delta$-quasipolar ring and $a\in Nil(R)$. Then  $\Delta$-spectral idempotent of $a$ is $0$.
\end{lem}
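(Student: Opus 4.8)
The plan is to mirror the argument of Lemma \ref{invidem}, replacing the involution-type unit used there with a suitable reflection built directly from the idempotent. Write $p$ for a $\Delta$-spectral idempotent of $a$, so that $p^2 = p \in comm^2(a)$ and $a + p \in \Delta(R)$; the goal is to force $p = 0$. First I would record the one structural fact that makes all the subsequent elements commute: since $a \in comm(a)$, every member of $comm^2(a)$ commutes with $a$, so in particular $pa = ap$.

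The key manoeuvre is to extract an explicit unit from $a + p$. Because $p$ is idempotent, $1 - 2p$ is its own inverse and hence lies in $U(R)$. By the defining property of $\Delta(R)$, adding this unit to $a + p$ again yields a unit, and a short computation gives $(a+p) + (1 - 2p) = 1 - p + a =: v \in U(R)$. Since $p$ commutes with $a$, it commutes with $v$, and therefore also with $v^{-1}$.

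The final step passes to the corner ring $pRp$, whose identity element is $p$. Using $pv = vp$ and $p^2 = p$ I would compute $vp = ap = pa$, and observe that $v^{-1}p$ serves as a two-sided inverse of $vp$ inside $pRp$; thus $ap$ is a unit of $pRp$. On the other hand $ap$ is nilpotent, for $a$ and $p$ commute and $a$ is nilpotent, so $(ap)^n = a^n p^n = 0$ for a suitable $n$. A nilpotent element can be a unit only in the zero ring, so the corner $pRp$ must be trivial and $p = 0$, as claimed. As a by-product one sees that $a = a + p \in \Delta(R)$, so every nilpotent element of a $\Delta$-quasipolar ring already lies in $\Delta(R)$.

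The main obstacle is essentially bookkeeping: one must track the commuting relations carefully enough to guarantee that $v^{-1}p$ genuinely inverts $ap$ on both sides within $pRp$, and to confirm that $ap$ is in fact nilpotent. Once these are in place, the principle that a nilpotent unit forces a trivial corner closes the argument at once.
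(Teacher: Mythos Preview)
Your argument is correct and considerably cleaner than the paper's. The paper expands $(a+p)^n$ binomially and then argues recursively: it multiplies by successive powers $a^{n-1}, a^{n-2}, \ldots$ and peels off the terms $a^{n-1}p, a^{n-2}p, \ldots$ one at a time as members of $\Delta(R)$, finally concluding that $p \in \Delta(R)$ and hence $1-p \in U(R)$, forcing $p=0$. That inductive stripping relies on the assertion that $\Delta(R)$ is closed under multiplication by nilpotents, which is not one of the listed properties of $\Delta(R)$ and is not justified in the text.

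Your route sidesteps all of this: adding the self-inverse unit $1-2p$ to $a+p$ produces the unit $v = 1-p+a$, and since $vp = ap$ the corner $pRp$ then contains a nilpotent unit, collapsing it to zero. This uses only the defining property of $\Delta(R)$ and the commutation $ap = pa$, so it is both shorter and on firmer ground than the paper's argument. The same by-product, $Nil(R) \subseteq \Delta(R)$, falls out immediately.
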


\begin{proof}
    Let $R$ be a $\Delta$-quasipolar ring and $a^n=0$ for $n\in \Bbb{Z}$. Then there exists $p^2=p\in comm^2(a)$ such that $(a+p)\in \Delta(R)$. Since $\Delta(R)$ is a subring, $(a+p)^n\in \Delta(R)$. Hence $(a+p)=a^n+\left(%
\begin{array}{c}
  n \\
  1 \\
\end{array}%
\right)a^{n-1}p+\left(%
\begin{array}{c}
  n \\
  2 \\
\end{array}%
\right)a^{n-2}p+\cdots +\left(%
\begin{array}{c}
  n \\
  n-2 \\
\end{array}%
\right)a^2p+\left(%
\begin{array}{c}
  n \\
  n-1 \\
\end{array}%
\right)ap+p\in \Delta(R)$. Since $\Delta(R)$ is closed with respect to multiplication by nilpotent elements, $a^{n-1}(a+p)^n=a^{n-1}p\in \Delta(R)$. Therefore $(a+p)^{n}-na^{n-1}p=\left(%
\begin{array}{c}
  n \\
  2 \\
\end{array}%
\right)a^{n-2}p+\cdots +\left(%
\begin{array}{c}
  n \\
  n-2 \\
\end{array}%
\right)a^2p+\left(%
\begin{array}{c}
  n \\
  n-1 \\
\end{array}%
\right)ap+p\in \Delta(R)$. Multiplying the last equation by $a^{n-2}$, we have $a^{n-2}p\in \Delta(R)$. So $(a+p)^{n}-na^{n-1}p-\left(%
\begin{array}{c}
  n \\
  2 \\
\end{array}%
\right)a^{n-2}p=\left(%
\begin{array}{c}
  n \\
  3 \\
\end{array}%
\right)a^{n-3}p+\cdots +p\in \Delta(R)$. If we continue similarly, we have $p\in \Delta(R)$. Then $1-p\in U(R)$ and this implies $p=0$.
\end{proof}

Now we give a direct consequence of above lemma.

\begin{cor}
    Let $R$ be a ring. If $R$ is $\Delta$-quasipolar, then $Nil(R)\subseteq \Delta(R)$.
\end{cor}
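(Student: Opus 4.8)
The plan is to read this off directly from the immediately preceding lemma, which already carries out all the real work. First I would take an arbitrary element $a \in Nil(R)$. Since $R$ is $\Delta$-quasipolar by hypothesis, the element $a$ is in particular $\Delta$-quasipolar, so by definition there exists an idempotent $p \in comm^2(a)$ with $a + p \in \Delta(R)$; this $p$ is by definition a $\Delta$-spectral idempotent of $a$.

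Next I would invoke the preceding lemma, which asserts precisely that for a nilpotent element the $\Delta$-spectral idempotent must be $0$. Substituting $p = 0$ into the decomposition above yields $a = a + p \in \Delta(R)$. Since $a$ was an arbitrary nilpotent element, this gives the desired inclusion $Nil(R) \subseteq \Delta(R)$.

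The only point that needs a moment's care is confirming that the idempotent named in the previous lemma coincides with the $p$ appearing in the $\Delta$-quasipolarity decomposition, but this is immediate: both refer to the same object, namely an idempotent $p \in comm^2(a)$ satisfying $a + p \in \Delta(R)$. Consequently there is no genuine obstacle here; the entire burden of proof has already been discharged in the preceding lemma, whose argument expands $(a+p)^n$ via the binomial theorem and peels off the terms $a^{n-1}p, a^{n-2}p, \dots$ one at a time using that $\Delta(R)$ is a subring closed under multiplication by nilpotent elements, ultimately forcing $p \in \Delta(R)$ and hence $p = 0$.
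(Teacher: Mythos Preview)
Your argument is correct and matches the paper exactly: the paper simply labels this a ``direct consequence of above lemma'' with no further proof, and your write-up spells out precisely that deduction. Your added remark about the preceding lemma's argument applying to \emph{any} $\Delta$-spectral idempotent (not just one) is appropriate and consistent with how that lemma is proved.
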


\begin{lem}
    Let $R$ be a $\Delta$-quasipolar ring. For every $a\in R$, there exist $p^2=p\in comm^2(a)$ such that $a+p\in U(R)$.
\end{lem}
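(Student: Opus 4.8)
The plan is to produce the required idempotent directly from the $\Delta$-quasipolar decomposition of $a$ by passing to its complementary idempotent. Fix $a \in R$. Since $R$ is $\Delta$-quasipolar, there is an idempotent $p \in comm^2(a)$ with $a + p \in \Delta(R)$; write $d := a + p \in \Delta(R)$. My candidate for the new spectral idempotent is $q := 1 - p$. First I would record that $q$ is again idempotent, since $(1-p)^2 = 1 - 2p + p^2 = 1 - p$, and that $q \in comm^2(a)$, because $comm^2(a)$ is a unital subring of $R$ containing both $1$ and $p$.

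Next I would carry out the computation $a + q = a + 1 - p = (a + p) + (1 - 2p) = d + (1 - 2p)$. The key observation is that $1 - 2p$ is a unit: indeed $(1 - 2p)^2 = 1 - 4p + 4p^2 = 1$, so $1 - 2p$ is its own inverse. Now I invoke the defining property of $\Delta(R)$, namely that $d + u \in U(R)$ for every $u \in U(R)$; applying this with $u = 1 - 2p$ yields $a + q = d + (1 - 2p) \in U(R)$. This exhibits an idempotent $q \in comm^2(a)$ with $a + q \in U(R)$, as desired.

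There is essentially no serious obstacle here: the whole argument rests on the two elementary facts that $1 - 2p$ squares to $1$ (hence is a unit) and that adding a unit to an element of $\Delta(R)$ returns a unit, which is precisely the definition of $\Delta(R)$. The only point requiring a moment's care is confirming that the replacement idempotent $q = 1 - p$ still lies in $comm^2(a)$ rather than merely in $comm(a)$; this is immediate once one notes that $comm^2(a)$ is a subring containing $1$. It is worth remarking that Lemma \ref{2} (the fact that $2 \in \Delta(R)$) is not needed for this argument, since the self-invertibility of $1 - 2p$ already supplies the unit to which the defining property of $\Delta(R)$ is applied.
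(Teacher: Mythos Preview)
Your proof is correct, but it takes a slightly different route from the paper's. The paper applies $\Delta$-quasipolarity to $a-1$ rather than to $a$: since $comm(a-1)=comm(a)$ one has $comm^2(a-1)=comm^2(a)$, so there is an idempotent $p\in comm^2(a)$ with $(a-1)+p\in\Delta(R)$, and then adding the unit $1$ immediately gives $a+p\in U(R)$. You instead decompose $a$ itself and pass to the complementary idempotent $q=1-p$, exploiting that $1-2p$ is a self-inverse unit to which the defining property of $\Delta(R)$ can be applied. Both arguments are short and elementary; the paper's version is marginally quicker because the unit it adds is simply $1$, while your version has the small conceptual advantage of never leaving the element $a$ and of making explicit that the required idempotent is the complement of a $\Delta$-spectral idempotent of $a$.
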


\begin{proof}
    Let $R$ be a $\Delta$-quasipolar ring and $a\in R$. Then $a-1$ is $\Delta$-quasipolar, so there exists $p^2=p\in comm^2(a-1)$ such that $a-1+p\in \Delta(R)$. Hence, $1+a-1+p=a+p\in U(R)$ where $p\in comm^2(a)$.
\end{proof}

\begin{lem}\label{deltaideal}
    Let $R$ be a $\Delta$-quasipolar ring and $2\in U(R)$. Then $\Delta(R)$ is an ideal.
\end{lem}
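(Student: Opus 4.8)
The plan is to exploit Lemma \ref{2} together with the multiplicative stability of $\Delta(R)$ recorded in \cite[Lemma 1]{Leroy}. Since $R$ is $\Delta$-quasipolar, Lemma \ref{2} already gives $2\in\Delta(R)$, and the extra hypothesis $2\in U(R)$ is precisely the lever that turns this membership into something rigid. Recall from \cite[Lemma 1]{Leroy} that $\Delta(R)$ is a subring of $R$ closed under multiplication by units on either side, and that $\Delta(R)$ is an ideal of $R$ if and only if $\Delta(R)=J(R)$. The strategy is therefore to pin down $\Delta(R)$ by acting on the element $2$ with the unit $2^{-1}$.

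First I would apply the closure of $\Delta(R)$ under unit-multiplication to $2\in\Delta(R)$ with the unit $2^{-1}\in U(R)$, obtaining $1=2^{-1}\cdot 2\in\Delta(R)$; thus $\Delta(R)$ contains the identity. I would then unwind the defining property of $\Delta(R)$ at $1$: for every $u\in U(R)$ we must have $1+u\in U(R)$, and choosing $u=-1\in U(R)$ yields $0=1+(-1)\in U(R)$. Hence $1=0$, so $R$ is the zero ring and $\Delta(R)=\{0\}=J(R)$, which is trivially an ideal.

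I expect the single nonformal step to be exactly this interaction: recognizing that $2\in\Delta(R)$ (from Lemma \ref{2}) and $2\in U(R)$ cannot coexist in a nonzero ring, since the unit-stability of $\Delta(R)$ would force $1\in\Delta(R)$, whereas $\Delta(R)$ can never meet $U(R)$ when $R\neq 0$. Everything surrounding this observation is formal bookkeeping with the definitions, and it is the passage ``$2\in\Delta(R)$ stable under units $\Rightarrow 1\in\Delta(R)$'' that a reader should check most carefully. If one prefers to route the conclusion through \cite[Lemma 1]{Leroy} rather than collapsing $R$ by hand, the same computation gives $1\in\Delta(R)$, whence $R=0$ and $\Delta(R)=J(R)$, so that lemma yields immediately that $\Delta(R)$ is an ideal.
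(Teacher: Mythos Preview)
Your argument is correct. Combining Lemma \ref{2} with the hypothesis $2\in U(R)$ and the unit-stability of $\Delta(R)$ from \cite[Lemma 1]{Leroy} forces $1\in\Delta(R)$, hence $0\in U(R)$ and $R=0$; the conclusion then holds trivially. (One can even bypass the unit-stability step: $2\in\Delta(R)$ and $-2\in U(R)$ give $0=2+(-2)\in U(R)$ directly.)

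This is, however, a genuinely different route from the paper's. The paper argues \emph{directly} that $\Delta(R)$ absorbs multiplication: for $x\in R$ and $y\in\Delta(R)$, the $\Delta$-quasipolar decomposition $x=d-p$ with $d\in\Delta(R)$ and $p$ idempotent gives $xy=dy-py$; then $dy\in\Delta(R)$ because $\Delta(R)$ is a subring, and $py\in\Delta(R)$ by \cite[Corollary 2]{Leroy} (this is where $2\in U(R)$ enters, via the unit $1-2p$). Your approach is shorter and more informative, since it reveals that the hypotheses of the lemma are satisfied only by the zero ring; indeed, the paper itself later combines this lemma with Lemma \ref{2} to reach exactly the contradiction you isolate here. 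The paper's approach, by contrast, gives a template (decompose $x$ and push each piece into $\Delta(R)$) that does not depend on the collapse and could in principle adapt to weaker hypotheses.
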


\begin{proof}
    Let $x\in R$ and $y\in \Delta(R)$. Since $R$ is $\Delta$-quasipolar, there exist $p^2=p\in comm^2(x)$, $d\in \Delta(R)$ such that $x+p=d$. Multiplying the equation by $y$, we get $xy+py=dy$. By \cite[Corollary 2]{Leroy}, $py\in \Delta(R)$. So $xy\in \Delta(R)$. Similarly, $yx\in \Delta(R)$.
\end{proof}

\begin{prop}
    The following are equivalent for a ring $R$ :
    \begin{enumerate}
        \item $R$ is local and $\Delta$-quasipolar.
        \item $R$ is $\Delta$-quasipolar with only idempotents $0$ and $1$.
        \item $R/J(R)\cong \Bbb{Z}_2$ 
    \end{enumerate}
\end{prop}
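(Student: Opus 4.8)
The plan is to prove the cycle of implications $(1)\Rightarrow(2)\Rightarrow(3)\Rightarrow(1)$. For $(1)\Rightarrow(2)$ I would invoke the standard fact that a local ring has no nontrivial idempotents: if $e^2=e$, then its image in the division ring $R/J(R)$ is an idempotent and hence $\bar 0$ or $\bar 1$, so $e\in J(R)$ or $1-e\in J(R)$; since an idempotent $f\in J(R)$ has $1-f\in U(R)$ and $f(1-f)=0$, multiplying by $(1-f)^{-1}$ gives $f=0$. Applying this to $f=e$ and to $f=1-e$ yields $e\in\{0,1\}$. Thus a local $\Delta$-quasipolar ring is $\Delta$-quasipolar with only idempotents $0$ and $1$.

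The heart of the argument is $(2)\Rightarrow(3)$. Since the only idempotents of $R$ are $0$ and $1$, and both lie in $comm^2(a)$ for every $a$, the $\Delta$-spectral idempotent of each $a\in R$ must be $0$ or $1$; hence for every $a\in R$ either $a\in\Delta(R)$ or $a+1\in\Delta(R)$. Taking $u=-1$ in the definition of $\Delta(R)$, the first alternative gives $1-a=-(a-1)\in U(R)$ and the second gives $a\in U(R)$. Therefore every $a\in R$ satisfies $a\in U(R)$ or $1-a\in U(R)$, which is exactly the standard criterion for $R$ to be local, so $R$ is local. I would then observe that for a local ring $\Delta(R)=J(R)$: if some $r\in\Delta(R)$ were a unit, then $u=-r\in U(R)$ would force $r+u=0\in U(R)$, which is absurd, so $\Delta(R)$ consists of non-units, giving $\Delta(R)\subseteq J(R)$, and the reverse inclusion always holds. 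By Lemma \ref{2}, $2\in\Delta(R)=J(R)$, so in the division ring $D=R/J(R)$ we have $\bar 2=0$, i.e. $-\bar 1=\bar 1$. The dichotomy above reduces modulo $J(R)$ to: $\bar a=\bar 0$ or $\bar a=-\bar 1=\bar 1$ for every $a$; hence $D=\{\bar 0,\bar 1\}\cong\mathbb{Z}_2$.

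Finally, for $(3)\Rightarrow(1)$, since $R/J(R)\cong\mathbb{Z}_2$ is a division ring, $R$ is local, and again $\Delta(R)=J(R)$ by the argument above; also $2\in J(R)$ because $2$ maps to $0$ in $\mathbb{Z}_2$. Given $a\in R$, its residue is $\bar 0$ or $\bar 1$: if $\bar a=\bar 0$ then $a\in J(R)=\Delta(R)$ and $p=0$ serves as a $\Delta$-spectral idempotent; if $\bar a=\bar 1$ then $a-1\in J(R)$, whence $a+1=(a-1)+2\in J(R)=\Delta(R)$ and $p=1$ works. As $0,1\in comm^2(a)$, each $a$ is $\Delta$-quasipolar, so $R$ is local and $\Delta$-quasipolar. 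I expect the main obstacle to be the step $(2)\Rightarrow(3)$: the decisive observation is that the trivial-idempotent hypothesis collapses $\Delta$-quasipolarity into the dichotomy $a\in\Delta(R)$ or $a+1\in\Delta(R)$, from which both locality and the identity $\Delta(R)=J(R)$ follow, after which $2\in\Delta(R)$ forces the residue division ring to be $\mathbb{Z}_2$.
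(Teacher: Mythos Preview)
Your proof is correct and, in fact, cleaner than the paper's. The overall scheme $(1)\Rightarrow(2)\Rightarrow(3)\Rightarrow(1)$ is the same, but the paper argues $(2)\Rightarrow(3)$ by first invoking Theorem~\ref{stdelta} together with external results (\cite[Lemma~2.4]{Danchev} and \cite[Lemma~14]{Nicholson}) to obtain locality, and then runs a contradiction argument using Lemma~\ref{invidem} and Lemma~\ref{deltaideal} to force $R/J(R)$ to have two elements; for $(3)\Rightarrow(1)$ it cites \cite[Proposition~2.11]{Cui} and Example~\ref{example}(2). By contrast, your key observation that the trivial-idempotent hypothesis collapses $\Delta$-quasipolarity to the dichotomy ``$a\in\Delta(R)$ or $a+1\in\Delta(R)$'' yields locality via the elementary criterion ``$a\in U(R)$ or $1-a\in U(R)$'' with no external input, and your direct verification that $\Delta(R)=J(R)$ in any local ring lets you finish $(2)\Rightarrow(3)$ using only Lemma~\ref{2}. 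Likewise, your $(3)\Rightarrow(1)$ is a direct two-case construction rather than a citation. The paper's route has the virtue of exercising several of the lemmas developed earlier; yours is essentially self-contained and avoids the somewhat roundabout use of Lemma~\ref{deltaideal} in the paper's contradiction step.
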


\begin{proof}
   (1) $\Rightarrow$ (2) It is clear.
   
   (2) $\Rightarrow$ (3) Let $R$ be a $\Delta$-quasipolar ring with only idempotents $0$ and $1$. Then $R$ is strongly $\Delta$-clean with only idempotents $0$ and $1$ by Theorem \ref{stdelta}. Then by using \cite[Lemma 2.4]{Danchev} and \cite[Lemma 14]{Nicholson}, $R$ is local. Let $u\notin J(R)$. We will prove that $1-u\in J(R)$. Assume that $1-u\notin J(R)$. Since $R$ is local, $1-u\in U(R)$. By Lemma \ref{invidem}, $\Delta$-spectral idempotent of $1-u$ is $1$. Then $1+1-u=2-u\in \Delta(R)$ and this implies $2\in U(R)$. So by Lemma \ref{deltaideal}, $\Delta(R)$ is an ideal. Moreover, $2\in \Delta(R)$ by Lemma \ref{2}, so this is a contradiction. Therefore, $1-u\in J(R)$ as desired.

   (3) $\Rightarrow$ (1) It is clear by \cite[Proposition 2.11]{Cui} and Example \ref{example}(2).\end{proof}

   \begin{lem}\label{ann}
    Let $R$ be a ring and $a+p=d$ is a $\Delta$-quasipolar representation of $a$ in $R$. Then $ann_l(a)\subseteq ann_l(p)$ and $ann_r(a)\subseteq ann_r(p)$.
\end{lem}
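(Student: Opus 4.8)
The plan is to reduce everything to a single observation — that $1-d$ is a unit — and then exploit the commuting relation $ap=pa$ forced by the hypothesis $p\in comm^2(a)$. Since $a\in comm(a)$ trivially, membership $p\in comm^2(a)$ immediately yields $ap=pa$; this is the only consequence of the double commutant condition I expect to need. Next, writing $d=a+p\in\Delta(R)$ and recalling that $-1\in U(R)$, the defining property $r+u\in U(R)$ for all $u\in U(R)$ gives $d-1\in U(R)$, whence $1-d=-(d-1)\in U(R)$. This unit is the engine of the whole argument.

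For the left annihilator, suppose $x\in ann_l(a)$, i.e.\ $xa=0$. I would sandwich the idempotent $p$ between $x$ on the left and $d$ on the right: using $p^2=p$ together with $ap=pa$ one computes $xpd=xp(a+p)=xpa+xp=(xa)p+xp=xp$, because $xpa=x(ap)=(xa)p=0$. Hence $xp(1-d)=0$, and multiplying on the right by the inverse of the unit $1-d$ forces $xp=0$, that is, $x\in ann_l(p)$. This establishes $ann_l(a)\subseteq ann_l(p)$.

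The right annihilator is handled by the mirror-image computation. If $x\in ann_r(a)$, i.e.\ $ax=0$, then expanding $dpx=(a+p)px=apx+px$ and using $apx=(pa)x=p(ax)=0$ gives $dpx=px$, so $(1-d)px=0$; left multiplication by $(1-d)^{-1}$ yields $px=0$, i.e.\ $ann_r(a)\subseteq ann_r(p)$. The only genuinely nonroutine point is the realization that $1-d$ is a unit, drawn directly from the definition of $\Delta(R)$; once that is in hand, both containments follow from one-line bracket manipulations, so I do not anticipate any serious obstacle. The sole thing to watch is the consistent use of $ap=pa$ to slide $a$ past $p$ at the right moment so that the hypotheses $xa=0$ and $ax=0$ can be invoked.
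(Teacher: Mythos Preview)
Your proof is correct and in fact cleaner than the paper's. The paper first squares the relation to get $a^2=d^2-2dp+p$, invokes an external lemma (\cite[Lemma~2.6]{Danchev}) to conclude that $d':=d^2-2dp\in\Delta(R)$, and then from $xa^2=0$ obtains $xp(1+d')=0$, whence $xp=0$ since $1+d'\in U(R)$. Your argument bypasses the squaring and the citation entirely: you work directly with $d$ rather than $d'$, using $xpd=xpa+xp=xp$ to get $xp(1-d)=0$ and then cancel the unit $1-d$. Both routes hinge on producing a unit of the form $1\pm(\text{element of }\Delta(R))$, but yours is shorter, self-contained, and uses only $ap=pa$ and $p^2=p$ rather than needing to know that $d^2-2dp$ lies in $\Delta(R)$.
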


\begin{proof}
    Let $a+p=d$ be a $\Delta$-quasipolar decomposition of $a$ in $R$. Then $a^2=d^2-2dp+p$. By \cite[Lemma 2.6]{Danchev}, there exists $d'\in \Delta(R)$ such that $a^2=d'+p$. Let $xa=0$. Then $xa^2=xd'+xp=0$. Multiplying by $p$, we have $0=xd'p+xp=xpd'+xp=xp(1+d')$. Since $1+d'\in U(R)$, $xp=0$ as desired. Similarly, it can be shown that $ann_r(a)\subseteq ann_r(p)$.
\end{proof}

\begin{lem}
    Let $R$ be a ring. If $R$ is abelian $J$-clean, then $R$ is $\Delta$-quasipolar.
\end{lem}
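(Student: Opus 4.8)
The plan is to reduce the statement to two facts already at our disposal: that $J(R)\subseteq\Delta(R)$, recorded at the start of this section, and that in an abelian ring every idempotent lies in the double commutant of every element. First I would fix an arbitrary $a\in R$ and feed it into the $J$-clean hypothesis to extract an idempotent together with a Jacobson-radical element. The only mild subtlety is a sign: applying $J$-cleanness to $-a$ rather than to $a$ yields $-a=e+j$ with $e^2=e$ and $j\in J(R)$, whence $a+e=-j\in J(R)$. This $e$ is the candidate $\Delta$-spectral idempotent of $a$, and $a+e$ already sits inside $J(R)$.

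Next I would check the two defining requirements of $\Delta$-quasipolarity for the pair $(a,e)$. For the membership condition, since $J(R)\subseteq\Delta(R)$, the relation $a+e=-j\in J(R)$ immediately gives $a+e\in\Delta(R)$. For the commutant condition, this is exactly where the abelian hypothesis does its work: in an abelian ring every idempotent is central, so $e$ commutes with every element of $R$, in particular with every element of $comm(a)$, and hence $e\in comm^2(a)$. Combining the two, we have $e^2=e\in comm^2(a)$ with $a+e\in\Delta(R)$, so $a$ is $\Delta$-quasipolar; as $a$ was arbitrary, $R$ is $\Delta$-quasipolar.

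I do not expect a serious obstacle here, since the argument essentially repackages the inclusion $J(R)\subseteq\Delta(R)$ together with the centrality of idempotents. It is worth noting that the stronger relation $a+e\in J(R)$ in fact shows $R$ to be $J$-quasipolar, from which the stated $\Delta$-quasipolarity also follows at once via Example \ref{example}(2); I may phrase the write-up either way. The one point to keep honest is the sign in the $J$-clean decomposition, chosen so that the idempotent enters $a+e$ with a plus sign rather than the minus sign one would obtain from writing $a=e+j$ directly, in which case $-e$ would no longer be idempotent and the argument would break.
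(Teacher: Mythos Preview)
Your proposal is correct and follows essentially the same argument as the paper: apply $J$-cleanness to $-a$ to write $-a=e+j$, deduce $a+e=-j\in J(R)\subseteq\Delta(R)$, and use the abelian hypothesis to place $e$ in $comm^2(a)$. Your additional remarks about the sign choice and the alternative route through Example~\ref{example}(2) are accurate but not needed for the write-up.
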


\begin{proof}
 Let $a\in R$. Since $R$ is $J$-clean, there exist $e^2=e$ and $j\in J(R)$ such that $-a=e+j$. This implies that $a+e\in J(R) \subseteq \Delta(R)$. Since $R$ is abelian, $e^2=e\in comm^2(a)$. So $R$ is $\Delta$-quasipolar, as desired.
\end{proof}

\begin{prop}
    Let $R$ be a $\Delta$-quasipolar ring and $\Delta(R)=J(R)$. Then $R$ is strongly $\pi$-regular if and only if $J(R)=R^{qnil}=Nil(R)=\Delta(R)$.
\end{prop}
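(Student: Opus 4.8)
The statement is a biconditional under the standing hypotheses that $R$ is $\Delta$-quasipolar and $\Delta(R)=J(R)$, so the plan is to treat the two implications separately. In both I would use the Fitting-type description of strong $\pi$-regularity: an element $a$ is strongly $\pi$-regular precisely when there is an idempotent $e\in comm^2(a)$ with $a+e\in U(R)$ and $ae\in Nil(R)$. This is exactly the quasipolar condition with the quasinilpotent part $ap$ strengthened to a genuinely nilpotent part (see \cite{Cui}), and recognizing this is what makes both directions short.

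For the implication $(\Leftarrow)$, assume $J(R)=R^{qnil}=Nil(R)=\Delta(R)$ and fix $a\in R$. Since $R$ is $\Delta$-quasipolar there is $p^2=p\in comm^2(a)$ with $d:=a+p\in\Delta(R)=J(R)=Nil(R)$, and $p$ commutes with $d$. I would then test the idempotent $e:=1-p\in comm^2(a)$. A short computation gives $ae=(d-p)(1-p)=d(1-p)=de$, which is nilpotent because $d$ is nilpotent and commutes with $e$. For the unit condition, $a+e=(1-2p)+d$, where $(1-2p)^2=1$ makes $1-2p$ a unit and $d\in J(R)$, so $a+e\in U(R)$; note that this step does \emph{not} require $2$ to be invertible, the radical element $d$ being absorbed by the unit $1-2p$. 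By the characterization above $a$ is strongly $\pi$-regular, and as $a$ was arbitrary, so is $R$.

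For $(\Rightarrow)$, assume $R$ is strongly $\pi$-regular. Two inclusions are automatic from the definitions: $Nil(R)\subseteq R^{qnil}$, and $J(R)\subseteq R^{qnil}$ (for $a\in J(R)$ and $x\in comm(a)$ one has $ax\in J(R)$, so $1+ax\in U(R)$). The heart of the argument is the reverse inclusion $R^{qnil}\subseteq Nil(R)$. Given $a\in R^{qnil}$, I would invoke the Fitting decomposition of the strongly $\pi$-regular element $a$: there is $e^2=e\in comm^2(a)$ with $ae$ nilpotent and $a$ invertible in the corner $(1-e)R(1-e)$. Writing $b$ for this inverse, extended by $0$ on $eR$, one checks $ab=ba=1-e$, so in particular $b\in comm(a)$; then applying the defining property of $R^{qnil}$ with $x=-b$ yields $e=1-ab\in U(R)$, forcing $e=1$ and hence $a=ae\in Nil(R)$. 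Thus $R^{qnil}=Nil(R)$.

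Finally I would assemble the chain. By the earlier corollary, $\Delta$-quasipolarity gives $Nil(R)\subseteq\Delta(R)=J(R)$, and combining this with $J(R)\subseteq R^{qnil}=Nil(R)$ from the previous step yields $Nil(R)\subseteq J(R)\subseteq R^{qnil}=Nil(R)$. Together with the hypothesis $\Delta(R)=J(R)$ this collapses to $J(R)=R^{qnil}=Nil(R)=\Delta(R)$, as required. I expect the main obstacle to be the inclusion $R^{qnil}\subseteq Nil(R)$ in the forward direction; the delicate point there is to verify that the quasi-inverse $b$ of $a$ on the invertible summand genuinely lies in $comm(a)$, which is what licenses the use of the defining property of $R^{qnil}$ and makes the argument go through.
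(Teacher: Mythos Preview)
Your proof is correct and follows essentially the same strategy as the paper's, with only cosmetic differences in packaging. For $(\Leftarrow)$ the paper applies $\Delta$-quasipolarity to $-1+a$ and then computes $a^n=(1-p)(a+p)^n$ before invoking Nicholson, whereas you apply it to $a$ itself and pass to $e=1-p$; for $(\Rightarrow)$ the paper uses the equation $a^m=a^{m+1}b$ with $b\in comm(a)$ from Lam's exercise, while you extract the same $b$ as the corner inverse coming from the Fitting idempotent---both routes force $1-ab$ (your $e$) to be an idempotent unit, hence $1$, and the paper re-derives $Nil(R)\subseteq\Delta(R)$ via Lemmas~\ref{invidem} and~\ref{2} where you simply cite the standing corollary.
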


\begin{proof}
    
$(\Rightarrow:)$ Let $a \in R^{qnil}$. Then for any $x \in comm(a)$, $1 - ax$ is invertible. By hypothesis, there exist a positive integer $m$ and $b \in R$ such that $a^m = a^{m+1}b$. Since $b \in comm(a)$ (\cite{Lam}, Page 347, Exercise 23.6(1)), $a^m = 0$. Hence $a \in Nil(R)$ and so $R^{qnil} \subseteq Nil(R)$. To prove $Nil(R) \subseteq \Delta(R)$, let $a \in Nil(R)$. By hypothesis, there exists $p^2 = p \in comm^2(1 - a)$ such that $1 - a + p \in \Delta(R)$. Since $1 - a$ is invertible, $p = 1$ by Lemma \ref{invidem}. Hence, $2 - a \in \Delta(R)$. Also, $2 \in \Delta(R)$ by Lemma \ref{2}, we then have $a \in \Delta(R)$.

$(\Leftarrow:)$ Let $a \in R$. There exists $p^2 = p \in comm^2(-1 + a)$ such that $-1 + a + p \in \Delta(R)$. Set $u = -1 + a + p $, by hypothesis, $u\in Nil(R)$. Then $u+1=a + p$ is invertible and $ap = up$ is nilpotent so that $a^n p = 0$ for some positive integer $n$. So $a^n = a^n(1 - p) = (u + (1 - p))^n(1 - p) = (u + 1)^n(1 - p) = (a + p)^n(1 - p) = (1 - p)(a + p)^n$. By \cite[Proposition 1]{Nicholson}, $a$ is strongly $\pi$-regular. This completes the proof. 
\end{proof}

\begin{lem}
    Let $I$ be an index set and for every $i\in I$, $R_i$ be rings. Then $R_i$ is $\Delta$-quasipolar if and only if $\prod_{i\in I}R_i$ is $\Delta$-quasipolar.
\end{lem}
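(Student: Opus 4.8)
The plan is to reduce both implications to the componentwise structure of the product ring $R:=\prod_{i\in I}R_i$, the single nonroutine ingredient being that passage to double commutants distributes over products. Writing a typical element as $a=(a_i)_{i\in I}$, componentwise multiplication immediately gives $comm_R(a)=\prod_{i\in I}comm_{R_i}(a_i)$, since $x=(x_i)$ commutes with $a$ exactly when $x_ia_i=a_ix_i$ for every $i$. Moreover an element $p=(p_i)$ is idempotent in $R$ iff each $p_i$ is idempotent in $R_i$, and by Lemma~2.1(5) we have $\Delta(R)=\prod_{i\in I}\Delta(R_i)$, so $a+p\in\Delta(R)$ iff $a_i+p_i\in\Delta(R_i)$ for all $i$. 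Thus once the double commutant is understood, both directions follow by reading the data off coordinatewise.

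The crucial step I would carry out first is the identity
\[
comm^2_R(a)=\prod_{i\in I}comm^2_{R_i}(a_i).
\]
For the inclusion $\supseteq$, if $y_i\in comm^2_{R_i}(a_i)$ for each $i$, then $y=(y_i)$ commutes with every element of $\prod_{i\in I}comm_{R_i}(a_i)=comm_R(a)$, whence $y\in comm^2_R(a)$. For the inclusion $\subseteq$, suppose $y=(y_i)\in comm^2_R(a)$ and fix an index $j$ together with an arbitrary $x_j\in comm_{R_j}(a_j)$. Form $x=(x_i)\in R$ by setting the $j$-th coordinate equal to $x_j$ and every other coordinate equal to $0$; since $0\in comm_{R_i}(a_i)$, we have $x\in comm_R(a)$, and comparing $j$-th coordinates in $xy=yx$ yields $x_jy_j=y_jx_j$. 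As $x_j$ and $j$ were arbitrary, $y_j\in comm^2_{R_j}(a_j)$ for every $j$, proving the identity.

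With this in hand the two directions are short. To show that each $R_i$ being $\Delta$-quasipolar forces $R$ to be $\Delta$-quasipolar, I would take $a=(a_i)\in R$, choose for each $i$ an idempotent $p_i\in comm^2_{R_i}(a_i)$ with $a_i+p_i\in\Delta(R_i)$, and set $p=(p_i)$; then $p^2=p\in comm^2_R(a)$ by the identity above and $a+p\in\prod_{i\in I}\Delta(R_i)=\Delta(R)$, so $a$ is $\Delta$-quasipolar. Conversely, assuming $R$ is $\Delta$-quasipolar, I would fix $j\in I$ and $a_j\in R_j$ and let $a\in R$ be the element whose $j$-th coordinate is $a_j$ and whose remaining coordinates are $0$; a $\Delta$-spectral idempotent $p=(p_i)$ of $a$ then has $j$-th coordinate $p_j$ satisfying $p_j^2=p_j$, $p_j\in comm^2_{R_j}(a_j)$, and $a_j+p_j\in\Delta(R_j)$, so $a_j$, and hence $R_j$, is $\Delta$-quasipolar. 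The main obstacle is precisely the double-commutant identity above; the forward direction is otherwise a direct componentwise assembly and the reverse direction a projection onto a single factor.
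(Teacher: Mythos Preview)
Your proof is correct and follows essentially the same componentwise approach as the paper: both use $\Delta\!\left(\prod R_i\right)=\prod\Delta(R_i)$, assemble idempotents coordinatewise for the forward direction, and project onto a single factor via an element supported in one coordinate for the converse. Your version is in fact cleaner, since you treat an arbitrary index set directly and isolate the identity $comm^2_R(a)=\prod_i comm^2_{R_i}(a_i)$ as a lemma proved in both inclusions, whereas the paper restricts to $I=\{1,2\}$ and only verifies the pieces of this identity it needs in each direction.
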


\begin{proof}
    We proceed with the proof for $I=\{1,2\}$. It can be easily generalized $I=\{1,\dots, n\}$. Let $R_1$ and $R_2$ be $\Delta$-quasipolar rings and $(r_1,r_2) \in R_1\times R_2$. Since $R_1$ and $R_2$ are $\Delta$-quasipolar, there exist $p_1^2=p_1\in comm^2(r_1)$, $p_2^2=p_2\in comm^2(r_2)$ such that $p_1+r_1\in \Delta(R_1)$, $p_2+r_2\in \Delta(R_2)$. Therefore $((p_1,p_2))^2=(p_1,p_2)\in comm^2((r_1,r_2))$. By \cite[Lemma 1]{Leroy}, $\Delta(R_1\times R_2)=\Delta(R_1)\times \Delta(R_2)$, so $(r_1,r_2)+(p_1,p_2)=(r_1+p_1,r_2+p_2)\in \Delta(R_1\times R_2)$. Conversely, let $R_1\times R_2$ be $\Delta$-quasipolar ring. Let $a\in R_1$. Then $(a,0)\in R_1\times R_2$ is $\Delta$-quasipolar. Hence there exist $(p_1,p_2)^2=(p_1,p_2)\in comm^2((a,0))$ such that $(a,0)+(p_1,p_2) \in \Delta(R_1\times R_2)= \Delta(R_1)\times \Delta(R_2)$. Therefore $a+p_1\in \Delta(R_1)$. Also it is obvious that $p_1^2=p_1$. Now let $x\in comm(a)$. Then $(x,0)(a,0)=(xa,0)=(ax,0)=(a,0)(x,0)$. Since $(p_1,p_2)\in comm^2((a,0))$, $(p_1,p_2)(x,0)=(p_1x,0)=(xp_1,0)=(x,0)(p_1,p_2)$. So $p_1\in comm^2(a)$,  as desired. $\Delta$-quasipolarity of $R_2$ can be shown similarly.
\end{proof}



\begin{thm}\label{eRe}
    Let $R$ be a $\Delta$-quasipolar ring and $e^2=e\in R$. Then $eRe$ is $\Delta$-quasipolar.
\end{thm}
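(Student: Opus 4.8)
The plan is to transport a $\Delta$-quasipolar decomposition of $a$ from $R$ down to the corner $eRe$, the essential point being that the spectral idempotent produced by the double-commutant condition is automatically compatible with the Peirce decomposition determined by $e$.

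First I would fix $a\in eRe$ and apply the hypothesis to $a$ regarded as an element of $R$: there is $p^2=p\in comm^2(a)$ with $a+p\in\Delta(R)$. Since $a=eae$, one has $ea=ae=a$, so $e\in comm(a)$; because $p\in comm^2(a)$, it follows that $p$ commutes with $e$, i.e. $ep=pe$. Hence $q:=epe=ep=pe$ satisfies $q^2=q$ and $q\in eRe$, and this $q$ will be the candidate $\Delta$-spectral idempotent of $a$ inside $eRe$ (whose identity is $e$). To check $q\in comm^2_{eRe}(a)$, where the commutants are now computed in $eRe$, note that if $x\in eRe$ satisfies $xa=ax$ then $x\in comm(a)$ in $R$, so $px=xp$; using $ex=xe=x$ one gets $qx=px=xp=xq$. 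Thus $q$ commutes with every element of $comm_{eRe}(a)$, as required.

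The crux is to show $a+q\in\Delta(eRe)$, and here I would exploit that $ep=pe$ forces $p$, and therefore $d:=a+p\in\Delta(R)$, to be block diagonal for $e$: since $ep(1-e)=0=(1-e)pe$ and $a=eae$, the off-diagonal Peirce components of $d$ vanish, while $ede=eae+epe=a+q$ and $(1-e)d(1-e)=(1-e)p(1-e)$. Now take any $w\in U(eRe)$ with inverse $w'$; then $w+(1-e)\in U(R)$ with inverse $w'+(1-e)$, so $d+w+(1-e)\in U(R)$ because $d\in\Delta(R)$. As $d$ is block diagonal and $w\in eRe$, this sum is again block diagonal, with $e$-corner $e\bigl(d+w+(1-e)\bigr)e=(a+q)+w$ and complementary corner $(1-e)d(1-e)+(1-e)$, the two off-diagonal components being zero. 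A block-diagonal element of $R$ is a unit precisely when each corner is a unit in its respective corner ring, so $(a+q)+w\in U(eRe)$. Since $w$ was arbitrary, $a+q\in\Delta(eRe)$, and $a$ is $\Delta$-quasipolar in $eRe$.

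I expect the last step to be the main obstacle: for a general $d\in\Delta(R)$ the corner $ede$ need \emph{not} lie in $\Delta(eRe)$, since cutting a unit of $R$ to the $e$-corner does not in general produce a unit of $eRe$, so the naive ``pad with $1-e$'' computation fails. What rescues the argument is exactly that the idempotent $p$ is drawn from $comm^2(a)$ and hence commutes with $e$; this makes $d=a+p$ block diagonal, decoupling the two Peirce corners and letting the unit computation go through. This is the one place where the double-commutant condition is genuinely used, and it is worth isolating the statement ``if $d\in\Delta(R)$ is block diagonal for $e$ then $ede\in\Delta(eRe)$'' as the key lemma underlying the proof.
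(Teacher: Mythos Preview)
Your proof is correct, and the overall strategy---transport the decomposition of $a$ from $R$ to the corner $eRe$---is the same as the paper's. The execution, however, differs in two places worth noting.

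First, to relate $p$ and $e$, the paper does not argue via $e\in comm(a)$ as you do; instead it invokes its annihilator lemma (Lemma~\ref{ann}): since $(1-e)a=a(1-e)=0$, that lemma gives $(1-e)p=p(1-e)=0$, so in fact $p=epe$ already lies in $eRe$. Your argument only yields $ep=pe$, leaving a possibly nonzero $(1-e)$-block of $p$; this is weaker but entirely sufficient for what follows. Second, to obtain $a+q\in\Delta(eRe)$, the paper simply quotes an external result (\cite[Lemma~2.19]{Danchev}) to the effect that $\Delta(R)\cap eRe\subseteq\Delta(eRe)$, whereas you supply a direct block-diagonal argument: pad $w\in U(eRe)$ to $w+(1-e)\in U(R)$, add $d$, and read off the $e$-corner of the resulting unit. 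Your route is more self-contained (it avoids both Lemma~\ref{ann} and the cited lemma), while the paper's route extracts the sharper conclusion that the spectral idempotent itself---not merely its corner---lives in $eRe$.
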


\begin{proof}
    Let $a\in eRe$. Since $R$ is $\Delta$-quasipolar, there exists $p^2=p\in comm^2(a)$ such that $a+p\in \Delta(R)$. Since $a\in eRe$, we have $a=ea=ae=eae$. It is clear that $(1-e)\in ann_r(a)\cap ann_l(a)$. Then we have $(1-e)\in ann_r(p)\cap ann_l(p)$ by Lemma \ref{ann}. Hence $p=pe=ep=epe$ and this implies $(epe)^2=epe$. Thus  we have $a+epe=a+p \in \Delta(R)\cap eRe$. By \cite[Lemma 2.19]{Danchev}, $a+e\in \Delta(eRe)$. Now, let $b\in eRe$ such that $ab=ba$. Since $p\in comm^2(a)$, $pb=bp$. So we have $epeb=bepe$ as desired.
\end{proof}

Let $R$ and $V$ be rings and $V$ be an $(R,R)$-bimodule that is also a ring with
$$
(vw)r = v(wr), \quad (vr)w = v(rw), \quad \text{and} \quad (rv)w = r(vw)
$$
for all $v, w \in V$ and $r \in R$. The \textit{Dorroh extension} $D(R,V)$ of $R$ by $V$ is defined as the ring consisting of the additive abelian group $R \oplus V$ with multiplication
$$
(r,v)(s,w) = (rs, rw + vs + vw)
$$
where $r, s \in R$ and $v, w \in V$.

\begin{thm}
    Let $R$ be a ring. Then we have the following.
    \begin{enumerate}
        \item If $D(R,V)$ is $\Delta$-quasipolar, then $R$ is $\Delta$-quasipolar.
        \item If the following conditions are satisfied, then $D(R,V)$ is $\Delta$-quasipolar.
        \begin{enumerate}
            \item[(i)] $R$ is $\Delta$-quasipolar.
            \item[(ii)] If $e^2=e$, then $ev=ve$ for all $v\in V$.
            \item[(iii)] If $v\in V$, then there exists $w\in V$ such that \\$v+w+vw=0$. 
        \end{enumerate}
    \end{enumerate}
\end{thm}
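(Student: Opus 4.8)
The plan is to prove the two implications separately, organizing everything around the canonical projection $\pi\colon D(R,V)\to R$, $\pi(r,v)=r$, which is a surjective ring homomorphism sending the identity $(1,0)$ to $1$. Two structural facts drive both parts: units of $R$ lift to units of $D(R,V)$ via $u\mapsto(u,0)$ (with inverse $(u^{-1},0)$), and, for the converse direction, an element $(r,v)$ is a unit in $D(R,V)$ precisely when $r\in U(R)$.

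For part (1), assume $D(R,V)$ is $\Delta$-quasipolar and fix $a\in R$. First I would record that $\pi\big(\Delta(D(R,V))\big)\subseteq\Delta(R)$: if $x\in\Delta(D(R,V))$ and $u\in U(R)$, then $(u,0)\in U(D(R,V))$, so $x+(u,0)\in U(D(R,V))$, whence $\pi(x)+u=\pi(x+(u,0))\in U(R)$; as $u$ ranges over $U(R)$ this forces $\pi(x)\in\Delta(R)$. Now lift $a$ to $(a,0)$ and apply $\Delta$-quasipolarity in $D(R,V)$ to obtain an idempotent $(p,q)\in comm^2((a,0))$ with $(a,0)+(p,q)\in\Delta(D(R,V))$. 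Applying $\pi$ gives an idempotent $p=\pi(p,q)$ with $a+p\in\Delta(R)$. Finally $p\in comm^2(a)$, since for any $x\in comm(a)$ one has $(x,0)\in comm((a,0))$, so $(p,q)$ commutes with $(x,0)$, and reading off the first coordinate yields $px=xp$. Hence $a$ is $\Delta$-quasipolar.

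For part (2) the first task is to identify the units and the $\Delta$-set of $D(R,V)$. When $r\in U(R)$ one can factor $(r,v)=(r,0)(1,r^{-1}v)$, reducing invertibility to when $(1,z)$ is a unit, i.e.\ when $z$ has a two-sided quasi-inverse $z'$ with $z+z'+zz'=0=z'+z+z'z$; indeed $(1,z)(1,w)=(1,\,z+w+zw)$, so the circle operation $z\circ w=z+w+zw$ turns $V$ into a monoid with identity $0$. Condition (iii) supplies a right quasi-inverse for every $z\in V$, and a standard monoid argument (if every element of a monoid has a right inverse then the monoid is a group) upgrades this to genuine invertibility, so every $(1,z)$ is a unit. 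Combined with the fact that $\pi$ sends units to units, this gives $(r,v)\in U(D(R,V))\iff r\in U(R)$, from which I would deduce $\Delta(D(R,V))=\{(d,v):d\in\Delta(R),\ v\in V\}$; only the inclusion $\{(d,v):d\in\Delta(R),\ v\in V\}\subseteq\Delta(D(R,V))$, which genuinely uses (iii), is needed in what follows.

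Now take any $(a,v)\in D(R,V)$. Using (i), choose $p^2=p\in comm^2(a)$ with $a+p\in\Delta(R)$, and consider $(p,0)$, which is idempotent since $(p,0)^2=(p^2,0)=(p,0)$. Then $(a,v)+(p,0)=(a+p,v)\in\Delta(D(R,V))$ by the inclusion above, so the only remaining point — and the main obstacle — is to show $(p,0)\in comm^2((a,v))$. Given $(x,y)\in comm((a,v))$, comparing first coordinates of the commutation relation gives $xa=ax$, hence $x\in comm(a)$ and therefore $px=xp$ because $p\in comm^2(a)$; meanwhile the relation $(p,0)(x,y)=(x,y)(p,0)$ amounts precisely to $px=xp$ together with $py=yp$. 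This last identity is exactly where condition (ii) enters: $p$ is an idempotent of $R$ and $y\in V$, so $py=yp$. Thus $(p,0)\in comm^2((a,v))$ and $(a,v)$ is $\Delta$-quasipolar. The two delicate points are therefore the group-theoretic upgrade of (iii) needed to describe the units (and hence $\Delta(D(R,V))$), and the use of (ii) to control the $V$-coordinate of the double commutant.
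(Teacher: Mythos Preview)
Your proof is correct and follows essentially the same route as the paper: for (1) you both lift $a$ to $(a,0)$, pick a $\Delta$-spectral idempotent $(p,q)$, and read off $p$ via the projection; for (2) you both take $(p,0)$ as the $\Delta$-spectral idempotent, show $(a+p,v)\in\Delta(D(R,V))$ via the inclusion $\{(d,v):d\in\Delta(R)\}\subseteq\Delta(D(R,V))$, and verify $(p,0)\in comm^2((a,v))$ using $p\in comm^2(a)$ for the first coordinate and condition (ii) for the second. The only presentational difference is that the paper cites Nicholson--Zhou for the unit description of $D(R,V)$, whereas you derive it self-containedly from (iii) via the circle-monoid argument; otherwise the arguments coincide.
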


\begin{proof}
    (1) Let $r\in R$, then $(r,0)\in D(R,V)$ is $\Delta$-quasipolar. So there exists $e^2=e\in comm^2((r,0))$ such that $(r,0)+e\in \Delta(D(R,V))$. Let $e=(p,q)$ for some $p\in R$ and $q \in V$. Then $p^2=p\in R$. Now let $xr=rx$ for some $x\in R$. Then we have $(x,0)(r,0)=(xr,0)=(rx,0)=(r,0)(x,0)$. Since $(p,q)\in comm^2((r,0))$, we have $(p,q)(x,0)=(px,qx)=(xp,xq)=(x,0)(p,q)$. So $px=xp$ and this implies $p\in comm^2(r)$. Also we have $(r,0)+(p,q)=(r+p,q)\in \Delta(D(R,V))$. Therefore $r+p\in \Delta(R)$. This completes the proof.

    (2) Assume that (i), (ii), (iii) hold. Let $(r,v)\in D(R,V)$.

    \textbf{Claim:} $D(\Delta(R),V)\subseteq \Delta(D(R,V))$. Let $(x,y)\in D(\Delta(R),V))$ and $(u,v)\in U(D(R,V))$. Then $1+(x,y)(u,v)=(1+xu,xv+yu+yv)\in U(D(R,V))$ by \cite[Proposition 7]{Zhou} as desired.

    Now there exists $p^2=p\in comm^2(r)$ such that $r+p\in \Delta(R)$. Since $p^2=p$, $(p,0)^2=(p,0)$ and by claim $(r,v)+(p,0)=(r+p,v)\in \Delta(D(R,V))$. Finally let $(x,y)\in D(R,V)$ such that $(x,y)(r,v)=(r,v)(x,y)$. Since $p\in comm^2(r)$, we have $(p,0)(x,y)=(x,y)(p,0)$ as desired.
\end{proof}

\begin{prop}
    For any ring $R$, the following hold.
    \begin{enumerate}
        \item $R[x]$ is not $\Delta$-quasipolar.
        \item For every $n\geq 2$, the matrix ring $M_n(R)$ is not $\Delta$-quasipolar. 
    \end{enumerate}
\end{prop}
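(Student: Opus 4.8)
The plan is, in each case, to isolate one element that fails to be $\Delta$-quasipolar, whence the whole ring fails; assume $R\neq 0$ so that $1\neq 0$. For (1) the test element is the indeterminate $x$. Since $x$ is central in $R[x]$ we have $comm(x)=R[x]$, hence $comm^2(x)=Z(R[x])=Z(R)[x]$; as $Z(R)$ is commutative, the idempotents of $Z(R)[x]$ are precisely the constant central idempotents of $R$. Now suppose $R[x]$ were $\Delta$-quasipolar and apply to $a=x$ the lemma that every element of a $\Delta$-quasipolar ring admits an idempotent $p\in comm^2(a)$ with $a+p\in U(R)$; this yields a constant central idempotent $p$ with $x+p\in U(R[x])$. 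Feeding $x+p$ through the augmentation homomorphism $\varepsilon\colon R[x]\to R$, $\varepsilon(f)=f(0)$ (which sends units to units), shows $p=\varepsilon(x+p)$ is a unit, and an idempotent unit equals $1$, so $x+1\in U(R[x])$. But $x+1$ is not a unit: if $(x+1)g=1$ with $g=\sum_{i=0}^{m}b_ix^i$ and $b_m\neq 0$, then centrality of $x$ makes $b_mx^{m+1}$ the uncancelled top term of $(x+1)g$, forcing degree $m+1\geq 1$, a contradiction. Hence $x$ is not $\Delta$-quasipolar and $R[x]$ is not $\Delta$-quasipolar.

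For (2) I would instead use the corollary that a $\Delta$-quasipolar ring $S$ satisfies $\mathrm{Nil}(S)\subseteq\Delta(S)$, applied to $S=M_n(R)$. The test element is the matrix unit $E_{12}$, which is nilpotent since $E_{12}^2=0$ for $n\geq 2$; thus if $M_n(R)$ were $\Delta$-quasipolar we would need $E_{12}\in\Delta(M_n(R))$. To refute this I exhibit a single unit violating the characterization $\Delta(S)=\{r\mid ru+1\in U(S)\ \forall u\in U(S)\}$: take $U=\left[\begin{array}{cc} 0 & 1 \\ -1 & 0 \end{array}\right]\oplus I_{n-2}$, which is invertible. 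A direct computation gives $E_{12}U=-E_{11}$, so $E_{12}U+I=I-E_{11}=\mathrm{diag}(0,1,\dots,1)$ is a nontrivial idempotent, hence not a unit. Therefore $E_{12}\notin\Delta(M_n(R))$, contradicting the corollary, and $M_n(R)$ is not $\Delta$-quasipolar.

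The verifications that $x+1$ is a non-unit and that $E_{12}U=-E_{11}$ are routine. The one step needing genuine care, and the main obstacle, is in (1): passing from "$p$ is an idempotent of $comm^2(x)$" to "$p$ is a constant". This rests on correctly identifying $comm^2(x)$ as $Z(R)[x]$ (using that $x$ is central, so $comm(x)$ is all of $R[x]$) together with the standard fact that a commutative polynomial ring has no nonconstant idempotents. Once $p$ is known to be constant, the augmentation argument pins it to $1$ and the contradiction is immediate; for (2) there is no comparable subtlety, since the nilpotence of $E_{12}$ and the explicit unit do all the work.
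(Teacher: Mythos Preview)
Your argument is correct, but it is a genuinely different route from the one in the paper. The paper's proof is a one-liner: it invokes Theorem~\ref{stdelta} (every $\Delta$-quasipolar ring is strongly $\Delta$-clean) and then cites \cite[Proposition 2.21]{Danchev} for the fact that neither $R[x]$ nor $M_n(R)$ is strongly $\Delta$-clean. You instead give direct, self-contained arguments using two internal results of the paper: for (1) the lemma that in a $\Delta$-quasipolar ring every $a$ admits $p^2=p\in comm^2(a)$ with $a+p\in U(R)$, and for (2) the corollary $\mathrm{Nil}(S)\subseteq\Delta(S)$. Your approach buys independence from the external reference and makes the obstructions explicit (the indeterminate $x$ and the matrix unit $E_{12}$ witness the failure), at the cost of a longer write-up; the paper's approach is shorter but outsources the work. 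Both are valid.
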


\begin{proof}
    With the aid of Corollary \ref{stdelta}, if $M_n(R)$ or $R[x]$ were $\Delta$-quasipolar, then they would be strongly $\Delta$-clean. However the authors showed otherwise \cite[Proposition 2.21]{Danchev}.
\end{proof}

{\bf The rings $H_{(s,t)}(R)$:} Let $R$ be a ring and  $s, t\in
C(R)\cap U(R)$. Set {\footnotesize$$H_{(s,t)}(R) = \left
\{\begin{bmatrix}a&0&0\\c&d&e\\0&0&f
\end{bmatrix}\in M_3(R)\mid a, c, d, e, f\in R, a - d = sc, d - f = te\right \}.$$}
Then $H_{(s,t)}(R)$ is a subring of $M_3(R)$.\\
Note that $U(H_{(s,t)}(R))=\left
\{\begin{bmatrix}a&0&0\\c&d&e\\0&0&f
\end{bmatrix}\in M_3(R)\mid a, d, f\in U(R)\right \}$. Now using that we have a useful lemma.

\begin{lem}\label{Hst}
    Let $R$ be a ring, then $$\Delta (H_{(s,t)}(R)) =\left
\{\begin{bmatrix}a&0&0\\c&d&e\\0&0&f
\end{bmatrix}\in M_3(R)\mid a, d, f\in \Delta(R)\right \}. $$
\end{lem}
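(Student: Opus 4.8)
The plan is to prove the set equality by establishing two inclusions, relying on the characterization of $\Delta(R)$ as $\{r \mid r+u \in U(R) \text{ for all } u \in U(R)\}$ together with the explicit description of $U(H_{(s,t)}(R))$ given just before the lemma. Let me denote by $S$ the proposed set on the right-hand side, consisting of those matrices in $H_{(s,t)}(R)$ whose diagonal entries $a,d,f$ all lie in $\Delta(R)$.

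First I would prove $S \subseteq \Delta(H_{(s,t)}(R))$. Take an arbitrary element $X \in S$ with diagonal entries $a,d,f \in \Delta(R)$ and an arbitrary unit $U \in U(H_{(s,t)}(R))$, which by the stated description has diagonal entries $a', d', f' \in U(R)$. I would compute $X + U$ and observe that its diagonal entries are $a+a'$, $d+d'$, $f+f'$; since each diagonal entry of $X$ lies in $\Delta(R)$ and the corresponding diagonal entry of $U$ is a unit, the definition of $\Delta(R)$ gives $a+a', d+d', f+f' \in U(R)$. By the description of units in $H_{(s,t)}(R)$, this already forces $X+U$ to be a unit (the off-diagonal constraints $a-d=sc$ etc.\ are automatically compatible since $X+U$ lies in $H_{(s,t)}(R)$, and unit membership depends only on the three diagonal entries being units). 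Hence $X \in \Delta(H_{(s,t)}(R))$.

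For the reverse inclusion $\Delta(H_{(s,t)}(R)) \subseteq S$, I would take $X \in \Delta(H_{(s,t)}(R))$ with entries $a,c,d,e,f$ and show each of $a,d,f$ lies in $\Delta(R)$. Fix an arbitrary unit $u \in U(R)$ of the base ring. The idea is to build a suitable unit of $H_{(s,t)}(R)$ and use that $X$ plus it is again a unit, then read off the diagonal. Choosing a unit of $H_{(s,t)}(R)$ with diagonal $(u, u, u)$ (for instance $uI$ if it lies in the ring, or more carefully a matrix of the prescribed form with all three diagonal entries equal to $u$ and appropriate $c,e$), the sum $X + (\text{that unit})$ is a unit of $H_{(s,t)}(R)$, which by the units description forces $a+u, d+u, f+u \in U(R)$. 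Since $u \in U(R)$ was arbitrary, the definition of $\Delta(R)$ yields $a,d,f \in \Delta(R)$, so $X \in S$.

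The main obstacle I anticipate is the bookkeeping in the reverse direction: I must exhibit, for each unit $u \in U(R)$, an actual element of $H_{(s,t)}(R)$ that is a unit and has the three prescribed diagonal entries equal to $u$, respecting the defining relations $a-d = sc$ and $d-f = te$. Since $s,t \in C(R)\cap U(R)$, choosing all diagonal entries equal to $u$ forces $a-d = 0 = sc$ and $d-f=0=te$, hence $c = e = 0$ (as $s,t$ are units), giving the legitimate unit $\mathrm{diag}(u,u,u) \in H_{(s,t)}(R)$; this is clean and causes no trouble. The only subtlety worth checking is that unit membership in $H_{(s,t)}(R)$ genuinely depends on diagonal entries only, which is exactly the content of the preceding displayed formula for $U(H_{(s,t)}(R))$ and may be invoked directly. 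I would conclude by combining both inclusions.
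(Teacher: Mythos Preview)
Your proposal is correct and follows essentially the same approach as the paper: both directions rely directly on the description of $U(H_{(s,t)}(R))$ in terms of the diagonal entries and the definition of $\Delta$. The only cosmetic difference is that for $\Delta(H_{(s,t)}(R)) \subseteq S$ you test against the scalar unit $\mathrm{diag}(u,u,u)$, whereas the paper allows an arbitrary unit diagonal $(x,z,q)$; either choice works and the arguments are otherwise identical.
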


\begin{proof}
    Let $\begin{bmatrix}a&0&0\\c&d&e\\0&0&f
\end{bmatrix}\in \Delta (H_{(s,t)}(R))$. Then for any $\begin{bmatrix}x&0&0\\y&z&p\\0&0&q
\end{bmatrix} \in U(H_{(s,t)}(R))$, we have $$\begin{bmatrix}a&0&0\\c&d&e\\0&0&f
\end{bmatrix}+\begin{bmatrix}x&0&0\\y&z&p\\0&0&q
\end{bmatrix}=\begin{bmatrix}a+x&0&0\\c+y&d+z&e+p\\0&0&f+q
\end{bmatrix}\in U(H_{(s,t)}(R)).$$ So for any $x,z,q\in U(R)$, we have $a+x,d+z, f+q\in U(R)$ and this implies $a,d,f\in \Delta(R)$. Now let $\begin{bmatrix}a&0&0\\c&d&e\\0&0&f
\end{bmatrix}\in H_{(s,t)}(R)$ and $a,d,f\in \Delta(R)$. Let $\begin{bmatrix}x&0&0\\y&z&p\\0&0&q
\end{bmatrix} \in U(H_{(s,t)}(R))$. Then $x,z,q\in U(R)$. Since $a,d,f\in \Delta(R)$ and $x,z,q\in U(R)$, we have $a+x,d+z,f+q\in U(R)$. Therefore we have $$\begin{bmatrix}a&0&0\\x&d&e\\0&0&f
\end{bmatrix}+\begin{bmatrix}x&0&0\\y&z&p\\0&0&q
\end{bmatrix}=\begin{bmatrix}a+x&0&0\\c+y&d+z&e+p\\0&0&f+q
\end{bmatrix} \in U(H_{(s,t)}(R)),$$ as asserted.
\end{proof}

\begin{thm}
    Let $R$ be a ring and $S:=H_{(s,t)}(R)$. Then $S$ is $\Delta$-quasipolar if and only if $R$ is $\Delta$-quasipolar.
\end{thm}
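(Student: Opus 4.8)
The plan is to prove the two implications separately, reducing everything to the three diagonal entries. The starting observation is that since $s,t\in C(R)\cap U(R)$, the defining relations $a-d=sc$ and $d-f=te$ force $c=s^{-1}(a-d)$ and $e=t^{-1}(d-f)$, so an element of $S$ is completely determined by its diagonal triple $(a,d,f)$.

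For the implication ``$S$ $\Delta$-quasipolar $\Rightarrow$ $R$ $\Delta$-quasipolar'' I would realize $R$ as a corner of $S$. Taking
$$e=\begin{bmatrix}1&0&0\\s^{-1}&0&0\\0&0&0\end{bmatrix},$$
one checks directly that $e\in S$ and $e^2=e$, and that
$$eSe=\left\{\begin{bmatrix}a&0&0\\s^{-1}a&0&0\\0&0&0\end{bmatrix}\;\middle|\; a\in R\right\}\cong R$$
as rings (with $e$ playing the role of $1$). Since $S$ is $\Delta$-quasipolar, Theorem \ref{eRe} shows $eSe$, and hence $R$, is $\Delta$-quasipolar.

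For the converse, given $M\in S$ with diagonal $(a,d,f)$, I would use the $\Delta$-quasipolarity of $R$ to choose idempotents $p_a\in comm^2(a)$, $p_d\in comm^2(d)$, $p_f\in comm^2(f)$ with $a+p_a,\,d+p_d,\,f+p_f\in\Delta(R)$, and propose the candidate $\Delta$-spectral idempotent
$$P=\begin{bmatrix}p_a&0&0\\s^{-1}(p_a-p_d)&p_d&t^{-1}(p_d-p_f)\\0&0&p_f\end{bmatrix}.$$
Three checks are routine: $P\in S$ (immediate from its entries and the defining relations); $P^2=P$ (the two off-diagonal identities collapse using $p_a^2=p_a$, $p_d^2=p_d$, $p_f^2=p_f$ together with the centrality of $s,t$); and $M+P\in\Delta(S)$, which follows from Lemma \ref{Hst} since the diagonal of $M+P$ is $(a+p_a,\,d+p_d,\,f+p_f)$ with all entries in $\Delta(R)$.

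The main obstacle, and where most of the work sits, is verifying $P\in comm^2(M)$. I would first determine $comm(M)$: writing a commuting $N\in S$ with diagonal $(x,z,v)$, the matrix equation $MN=NM$ produces five scalar relations, but after substituting $y=s^{-1}(x-z)$ and $w=t^{-1}(z-v)$ and using the centrality of $s,t$, the two off-diagonal relations turn out to be automatic consequences of $ax=xa$, $dz=zd$, $fv=vf$. Hence $comm(M)$ is exactly the set of $N\in S$ whose diagonal satisfies $x\in comm(a)$, $z\in comm(d)$, $v\in comm(f)$. The same substitution-and-centrality mechanism then reduces each off-diagonal entry of $PN-NP$ to a difference of the form $(p_ax-xp_a)-(p_dz-zp_d)$ (and its $t$-analogue involving $p_f$), both parts of which vanish because $p_a\in comm^2(a)$, $p_d\in comm^2(d)$, $p_f\in comm^2(f)$ force $p_a,p_d,p_f$ to commute with $x,z,v$ respectively. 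This establishes $P\in comm^2(M)$ and completes the proof.
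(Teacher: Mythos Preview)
Your proof is correct and follows essentially the same route as the paper: the same corner idempotent $e$ with $eSe\cong R$ for the forward direction (via Theorem~\ref{eRe}), and the same diagonal-wise construction of the candidate $\Delta$-spectral idempotent $P$ for the converse (with $M+P\in\Delta(S)$ by Lemma~\ref{Hst}). Your analysis of $comm(M)$ and of $PN-NP$ is in fact more explicit than the paper's, which records the diagonal commutation relations and then simply asserts that $P$ commutes with any such $N$; your substitution argument using $y=s^{-1}(x-z)$, $w=t^{-1}(z-v)$ and the centrality of $s,t$ fills in exactly the detail the paper leaves to the reader.
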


\begin{proof}
    Let $S$ be a $\Delta$-quasipolar ring. Set $E=\begin{bmatrix}1&0&0\\s^{-1}&0&0\\0&0&0
\end{bmatrix}$. Then $ESE\cong R$. By Theorem \ref{eRe}, $R$ is $\Delta$-quasipolar. Conversely, let $R$ be a $\Delta$-quasipolar ring. Let $\begin{bmatrix}a&0&0\\c&d&e\\0&0&f
\end{bmatrix}\in S$. Since $R$ is $\Delta$-quasipolar, there exist some $\alpha,\beta, \theta$ idempotents such that $a+\alpha=\delta_1$, $d+\beta=\delta_2$, $f+\theta=\delta_3$, $\delta_1,\delta_2,\delta_3\in \Delta(R)$ and $\alpha\in comm^2(a),\beta\in comm^2(d),\theta \in comm^2(f)$. Set $\gamma=s^{-1}(\alpha-\beta)$ and $\phi=t^{-1}(\beta-\theta)$, then $\begin{bmatrix}\alpha&0&0\\\gamma&\beta&\phi\\0&0&\theta
\end{bmatrix} \in S$ is an idempotent. Also $\begin{bmatrix}a&0&0\\c&d&e\\0&0&f
\end{bmatrix}+\begin{bmatrix}\alpha&0&0\\\gamma&\beta&\phi\\0&0&\theta
\end{bmatrix}=\begin{bmatrix}\delta_1&0&0\\c+\gamma&\delta_2&e+\phi\\0&0&\delta_3
\end{bmatrix}\in \Delta(S)$ by Lemma \ref{Hst}. Now let $$\begin{bmatrix}x&0&0\\y&z&p\\0&0&q
\end{bmatrix}\begin{bmatrix}a&0&0\\c&d&e\\0&0&f
\end{bmatrix}=\begin{bmatrix}a&0&0\\c&d&e\\0&0&f
\end{bmatrix}\begin{bmatrix}x&0&0\\y&z&p\\0&0&q
\end{bmatrix}.$$ This implies $ax=xa,dz=zd$,$fq=qf$,$cx+dy=ya+zc$. Then $$\begin{bmatrix}\alpha&0&0\\\gamma&\beta&\phi\\0&0&\theta
\end{bmatrix}\begin{bmatrix}x&0&0\\y&z&p\\0&0&q
\end{bmatrix}=\begin{bmatrix}x&0&0\\y&z&p\\0&0&q
\end{bmatrix}\begin{bmatrix}\alpha&0&0\\\gamma&\beta&\phi\\0&0&\theta
\end{bmatrix}$$\\ since $\alpha\in comm^2(a),\beta\in comm^2(d),\theta \in comm^2(f)$. This completes the proof.
\end{proof}

\end{document}